\numberwithin{equation}{section}
\newtheorem{example}{Example}[section]
\newtheorem{thm}{Theorem}[section]
\newtheorem{obs}{Observation}[section]
\newtheorem{cor}{Corollary}[section]
\newtheorem{pro}{Proposition}[section]
\newtheorem{lemma}{Lemma}[section]
\begin{document}
	\markboth{R. Rajkumar and T. Anitha}{}
	\title{Automorphism group of the reduced power (di)graph of a finite group}
	
%	\author {R. Rajkumar\footnote{e-mail: {\tt rrajmaths@yahoo.co.in}}~ and T. Anitha\footnote{e-mail: {\tt tanitha.maths@gmail.com}, }
%		\\ \small \it Department of Mathematics,
%		\small \it The Gandhigram Rural Institute-Deemed to be University,\\
%		\small \it Gandhigram--624 302, Tamil Nadu, India.\\
%	}
	
	\author {T. Anitha\footnote{e-mail: {\tt tanitha.maths@gmail.com},} \footnote{Author is supported by V.V.V College Managing Board Research Grant under VVVCMB-MRP Scheme 2023} 
		\\ \small \it Department of Mathematics,
		\small \it V.V.Vanniaperumal College for Women,\\
		\small \it Virudhunagar--626 001, Tamil Nadu, India.\\
		\and 
		R. Rajkumar\footnote{e-mail: {\tt rrajmaths@yahoo.co.in}} 
		\\ \small \it Department of Mathematics,
		\small \it The Gandhigram Rural Institute (Deemed to be University),\\
		\small \it Gandhigram--624 302, Tamil Nadu, India.\\
	}
	\date{}
	%\address{dfdssd}
	%\keywords{Example, Applied mathematics, Journal}
	\maketitle
	
	\begin{abstract}
		
	We describe the full automorphism group of the directed reduced power graph and the undirected reduced power graph of a finite group. We compute the full automorphism groups of these graphs of several classes of finite groups. Also, we establish some relation between the automorphism group of the undirected reduced power graph (resp. the directed reduced power graph) and the automorphism group of the undirected power graph   (resp. the directed power graph) of a finite group.

		\paragraph{Keywords:}Group, Power graph, Reduced power graph, Automorphism group.
		\paragraph{Mathematics Subject Classification:}05C25, 20B25.
	\end{abstract}
	
	\section{Introduction}
The \textit{directed power graph} $\overrightarrow{\mathcal{P}}(G)$  of a group $G$ is the digraph with vertex set $G$, in which there is an arc from $u$ to  $v$ if and only if $u \neq v$ and $v$ is a power of $u$; or equivalently $\left\langle v\right\rangle \subseteq \left\langle u \right\rangle $. 	The \textit{(undirected) power graph $\mathcal{P}(G)$ of $G$} is the underlying graph of $\overrightarrow{\mathcal{P}}(G)$. Kelarev and Quinn introduced the directed power graph of a semigroup~\cite{Kelarev} and a group~\cite{Kelarev 2000}. Chakrabarty et al. defined the power graph of a semigroup~\cite{Chakrabarty}. 		
The power graph has been studied extensively by many authors; see for instance~\cite{AliD,cameron,peter, Curtin,min feng 2015,Kelarev 2001,Mehranian} and a survey~\cite{survey}. The computation of the automorphism group of the power graph of a cyclic group was initiated by Alireza et al.~\cite{AliD} and settled by Mehranian et al.~\cite{Mehranian}. The automorphism group of the power graph of the dihedral group was also computed in~\cite{Mehranian}. Min Feng et al.~\cite{Min Aut PG 2016} described the full automorphism group of $\overrightarrow{\mathcal{P}}(G)$ and $\mathcal{P}(G)$  for a  finite group $G$. By using these, the computation of the automorphism group of $\overrightarrow{\mathcal{P}}(G)$ and $\mathcal{P}(G)$ when $G$ is cyclic, elementary abelian, dihedral
and generalized quaternion groups have been made. Ali Reza Ashrafi et al.~\cite{Ali} computed the
automorphism group of the power graph of some more classes of finite groups.

In \cite{raj RPG 2017}, the authors defined the following: The \textit{directed reduced power graph} $\overrightarrow{\mathcal{RP}}(G)$ of a group $G$ is the digraph with the elements of $G$ as its vertices, and for two vertices $u$ and $v$ in $G$, there is an arc from $u$ to $v$ if and only if  $v$ is a power of $u$ and $\left\langle v\right\rangle \neq \left\langle u\right\rangle $; or equivalently $\left\langle v\right\rangle \subset \left\langle u \right\rangle $. The \textit{(undirected) reduced power graph $\mathcal{RP}(G)$ of $G$} is the underlying graph of $\overrightarrow{\mathcal{RP}}(G)$. 	
For the study on the reduced power graphs, we refer the reader to ~\cite{raj RPG and PG 2018,raj RPG embedd 2018, Li 2021, Ma genus 3, Xma strong 2021, Xma 2021,raj RPG 2018,raj RPG LS 2019}.

For simplicity, in this paper we call ``directed reduced power graph (resp. directed power graph)" as ``reduced power digraph (resp. power digraph)". 

The first aim of this paper is to describe the full automorphism group of the  reduced power (di)graph of a finite group.  
%We see that a few initial constructions required for achieving our aim are analogues to those used in~\cite{Min Aut PG 2016}. However, we see that the proofs of the results requires significantly distinct arguments based on the structure of the reduced power (di)graph.
In achieving this, we could see that it requires significantly distinct arguments based on the structure of the reduced power (di)graph in comparison with the arguments used for computing the automorphism group of the  power (di)graph of a finite group (cf. \cite{Min Aut PG 2016}).
 The second aim of this paper is to describe some relations between the automorphism group of the reduced power graph (resp. the reduced power digraph) and the automorphism group of the power graph   (resp. the power digraph) of a finite group.

Let $D$ be a  digraph. The arc set of $D$ and the automorphism group of $D$ are denoted by $A(D)$ and $Aut(D)$, respectively. The in-degree, the out degree, the set of all in-neighbors and the set of all out-neighbors of a vertex $v$ in $D$ are denoted by $deg_{D}^{-}(v)$, $deg_{D}^{+}(v)$, $N_{D}^-{(v)}$ and $N_{D}^+{(v)}$, respectively. For a vertex $v$ in an undirected graph $\Gamma$, the degree of $v$ and the set of all neighbors of $v$ are denoted by $deg_{\Gamma}(v)$ and $N_{\Gamma}(v)$, respectively.

Let  $H \wr K$ denote the wreath product of the groups $H$ over $K$. For an integer $n\geq 1$, $\mathbb Z_n$ denotes the additive group of integers modulo $n$. For integers $m, n \geq 1$, $\mathbb Z_n^m$ denotes the direct product of $m$ copies of $\mathbb Z_n$. The dihedral group of order $2n$ ($n\geq 3$) is given by $D_{2n}=\left\langle a,b \mid a^n=e=b^2,\right.$ $\left. ab=ba^{-1} \right\rangle $. \noindent The generalized quaternion group of order $4n$ ($n\geq 2$) is given by $Q_{4n}=\left\langle a, b\mid  a^{2n}=e=b^4, bab^{-1}=a^{-1} \right\rangle $. \noindent The semi-dihedral group of order $8n$ ($n\geq 2$) is given by $SD_{8n}=\left\langle a, b\mid a^{4n}=e =b^2, bab^{-1}=\right.$ $\left.a^{2n-1} \right\rangle $. The modular group of order $p^\alpha$, $p$ a prime and $\alpha\geq 3$ is given by $\mathbb M_{p^\alpha}=\left\langle a,b \mid a^{p^{\alpha-1}}=e=b^p,\right.$ $\left. bab^{-1}=a^{p^{\alpha-2}+1} \right\rangle $.  For $n\geq 1$, the group $V_{8n}$ of order $8n$ is defined as $V_{8n}=\left\langle  a,b\mid a^{2n}=b^4=e, aba=b^{-1}, ab^{-1}a=b\right\rangle $ and the group $U_{6n}$ of order $6n$ is defined as $U_{6n}:=\left\langle  a,b\mid a^{2n}=b^3=e, b^{-1}ab=a^{-1}\right\rangle $.  The identity map on the set $\Omega$ is denoted by $\textbf{1}_{\Omega}$.

The rest of this paper is arranged as follows. In Section~2, we give some basic constructions for determining the automorphism group of the reduced power (di)graph of a finite group and to introduce a faithful group action on that group. In Section~3, we establish some results on the induced action of the automorphism group of the reduced power (di)graph  on the set of all equivalence classes of an equivalence relation defined in Section~2. In Section~4, we give the full structure of $Aut(\overrightarrow{\mathcal{RP}}(G))$ and
$Aut\left(\mathcal{RP}(G)\right)$ for a finite group $G$. In Section~5, we compute the full automorphism group of of the reduced power (di)graph of several classes of finite groups such as cyclic group, dihedral group, generalized quaternion group, semi-dihedral group, the group $V_{8n}$, the group $U_{6n}$, $p$-group with exponent $p$ or non-nilpotent group of order $p^mq$ with all non-trivial elements are of order $p$ or $q$, where $p$, $q$ are distinct primes. In Section~6, we establish the results, which describe some relation between the automorphism group of the  reduced power graph (resp. the  reduced power digraph) and the automorphism group of the  power graph   (resp. the  power digraph) of a finite group.	

\section{Two faithful group actions on a group}

In the rest of the paper $G$ denotes a  finite group. Let $\mathscr{C}(G)$ be the set of all cyclic subgroups of $G$. For $C\in \mathscr{C}(G)$, let $[C]$ denote the set of all generators of $C$. Take $\mathscr{C}(G)=\{C_1,C_2,\ldots,C_k\}$ and $[C_i]=\{[C_i]_1,[C_i]_2,\ldots,[C_i]_{s_i}\}$. Notice that $G$ is the disjoint union of $[C_1],[C_2],\ldots,[C_k]$.

Let $P(G)$ be the set of all permutations on $\mathscr{C}(G)$ preserving the order, inclusion and non-inclusion with the condition that if the image of $C$ is different from $C$, then it has at least one cyclic subgroup different from the subgroup of $C$. That is, for each $\sigma\in P(G)$, $\mid C_i^\sigma\mid=\mid C_i\mid$ for every $i\in \{1,2,\ldots,k\}$;  $C_i\subseteq C_j$ if and only if $C_i^\sigma\subseteq C_j^\sigma$ and either $C_i^\sigma=C_i$ or  the set of all cyclic subgroups of $C_i$ and the set of all cyclic subgroups of $C_i^\sigma$ are different. Let $M(G)$ be the set of all maximal cyclic subgroup of $G$. Take $M(G)=\{C_{i_1}, C_{i_2},\ldots,C_{i_r}\}$, where $C_{i_t}=\left\langle x_t\right\rangle$ for $t=1,2,\ldots,r$. Let $\mathscr{M}(G)$ be the restriction map of $P(G)$ on the set $M(G)$. Then $\mathscr{M}(G)$ is a permutation group on $M(G)$.
This group induces a faithful action on the set G:	
\begin{align}\label{cyclic group permutation}
	\mathscr{M}(G) \times G  \rightarrow G, \hspace{0.5cm} (\sigma, {x_i}^j)\mapsto {(x_i^\sigma)}^j.
\end{align}

Note that for each $x\in G$, $x\in \left\langle x_i\right\rangle $ for some $i=1,2,\ldots,r$ and so $x=x_i^j$. Suppose $x\in \left\langle x_i\right\rangle $ and $\left\langle x_j\right\rangle $, where $i\neq j$ and $i,j\in \{1,2,\ldots,r\}$. Then $x=x_i^s$ and $x=x_j^t$. Since $x^\sigma \in \left\langle x_i^\sigma\right\rangle $, $\left\langle x_j^\sigma \right\rangle $ and $o(x)=o(x^\sigma)$, so $\left\langle (x_i^\sigma)^s\right\rangle = \left\langle (x_j^\sigma)^t\right\rangle$. So for each $x\in G$, we fix any one maximal cyclic subgroup $C_{i_j}=\left\langle x_j\right\rangle $ which contains $x$ and write all the elements of $\left\langle x\right\rangle $ as a power of $x_j$.

Now consider the equivalence relation $\simeq$ defined on $G$ as follows~\cite{raj RPG and PG 2018}: for $x,y\in G$, $x\simeq y$ if and only if $N_{\mathcal{RP}(G)}(x)=N_{\mathcal{RP}(G)}(y)$. It is shown in~\cite[Proposition 2.2]{raj RPG and PG 2018} that $x\simeq y$ if and only if $N^-_{\overrightarrow{\mathcal{RP}}(G)}(x)=N^-_{\overrightarrow{\mathcal{RP}}(G)}(y)$ and $N^+_{\overrightarrow{\mathcal{RP}}(G)}(x)=N^+_{\overrightarrow{\mathcal{RP}}(G)}(y)$.  Let $\widehat{x}$ denote the $\simeq$-class determined by $x$.

Let $\mathscr{R}(G)=\{\widehat{x}\mid x\in G\}=\{\widehat{u_1},\widehat{u_2},\ldots,\widehat{u_t}\}$. Notice that $G$ is the disjoint union of $\widehat{u_1},\widehat{u_2},\ldots,\widehat{u_t}$. For $\Omega \subseteq G$, let $S_{\Omega}$ denote the symmetric group acting on the set $\Omega$. To prove our main result we need the following faithful group action on $G$ which is
analogous to \cite[Eqn. (2)]{Min Aut PG 2016}.
 	
\begin{equation}\label{action2}
	\left( {\prod_{i=1}^{t}S_{\widehat{u_i}}} \right) \times G   \rightarrow G, \hspace{0.5cm} ((\tau_1,\ldots,\tau_t),x )\mapsto x^{\tau_i}, \mbox{~~~where~~} x \in \widehat{u_i}.
\end{equation}

\section{The induced action of $Aut(\overrightarrow{\mathcal{RP}}(G))$ and $Aut({\mathcal{RP}}(G))$ on $\mathscr{R}(G)$}

Consider the equivalence relation $\sim$ defined on $G$ as follows: for
$x$, $y\in G$, $x \sim y$ if and only if $\left\langle x\right\rangle =\left\langle y\right\rangle $. Let $[x]$ denote the  $\sim$-class determined by $x$.

%	\begin{pro}******not used ?(\cite[Proposition 2.2]{raj RPG and PG 2018})
	%		For any $x,y\in G$, $N_{\mathcal{RP}(G)}(x)=N_{\mathcal{RP}(G)}(y)$ if and only if $N^-_{\overrightarrow{\mathcal{RP}}(G)}(x)=N^-_{\overrightarrow{\mathcal{RP}}(G)}(y)$ and $N^+_{\overrightarrow{\mathcal{RP}}(G)}(x)=N^+_{\overrightarrow{\mathcal{RP}}(G)}(y)$.
	%	\end{pro}

It is shown in~\cite[Lemma 2.3]{raj RPG and PG 2018} that any $\simeq$-class $\widehat{x}$ can be written as the union of distinct $\sim$-classes $[x_1]$, $[x_2]$, $\ldots, [x_r]$.  Also  $\widehat{x}$  is said to be of

\begin{itemize}
	\item[--] \textbf{Type~I}, if $r=1$, that is $\widehat{x}=[x]$;
	
	\item[--] \textbf{Type~II}, if $r\geq 2$ and $\left\langle x_i\right\rangle $s are distinct maximal cyclic subgroups of $G$ of same order for  all $i=1,2,\ldots,r$. We call $(o(x),r)$ as the parameter of $\widehat{x}$; 
	
	\item[--] \textbf{Type~III}, if $G$ is non-cyclic with $r\geq 2$ and the $\left\langle x_i\right\rangle $s are maximal cyclic subgroups of prime orders in $G$ such that $\mid \{o(x_i):~i=1,2,\ldots,r\}\mid \geq 2$; 
	
	\item[--] \textbf{Type~IV}, if one of the following holds:
	\begin{enumerate}[(i)]
		\item $G$ is non-cyclic with $r=2$ and $\left\langle x_1\right\rangle $, $\left\langle x_2\right\rangle $ are non-maximal cyclic subgroups of distinct prime orders in $G$;
		
		\item $G$ is cyclic with $r=2$ and $\left\langle x_1\right\rangle $, $\left\langle x_2\right\rangle $ are maximal cyclic subgroups of distinct prime orders in $G$.
		
		In this case we say the pair $(o(x_1),o(x_2))$ as the parameter of $\widehat{x}$.
	\end{enumerate}
\end{itemize}

From the above classification, we get the following

\begin{obs}\label{ob1}
	For any $x\in G$, the following hold.
	\begin{enumerate}[\normalfont(1)]
		\item If  $\widehat{x}$ is of Type~I, then $\mid \widehat{x} \mid =\varphi(o(x))$;
		
		\item If  $\widehat{x}$ is of Type~II with parameter $(m,r)$, then $\mid \widehat{x}\mid =r\varphi(m) $;
		
		\item If  $\widehat{x}$ is of Type~IV with parameter $(p,q)$, then $\mid \widehat{x}\mid =p+q-2$.
	\end{enumerate}
\end{obs}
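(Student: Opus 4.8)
The plan is to reduce all three size computations to a single counting principle. The foundational fact I would invoke is the classical identity that a cyclic group of order $n$ possesses exactly $\varphi(n)$ generators; since the $\sim$-class $[x]$ is by definition the set of generators of $\langle x\rangle$, this yields $|[x]| = \varphi(o(x))$ for every $x \in G$. I would then combine this with the decomposition recalled immediately before the statement, namely that each $\simeq$-class $\widehat{x}$ splits as a disjoint union of distinct $\sim$-classes $[x_1], \ldots, [x_r]$ (by \cite[Lemma 2.3]{raj RPG and PG 2018}). This produces the master formula
\[
|\widehat{x}| = \sum_{i=1}^{r} |[x_i]| = \sum_{i=1}^{r} \varphi(o(x_i)),
\]
and every part of the observation follows by specializing the right-hand side to the defining data of the relevant type.

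Concretely, for part (1) I would note that Type I forces $r = 1$ and $\widehat{x} = [x]$, so the sum collapses to the single term $\varphi(o(x))$. For part (2), Type II with parameter $(m, r)$ supplies $r$ summands with each $\langle x_i\rangle$ of common order $m = o(x)$, so the sum equals $r\varphi(m)$. For part (3), Type IV with parameter $(p, q)$ gives $r = 2$ with $o(x_1) = p$ and $o(x_2) = q$ distinct primes; substituting $\varphi(p) = p - 1$ and $\varphi(q) = q - 1$ reduces the sum to $(p-1) + (q-1) = p + q - 2$.

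I do not anticipate a genuine obstacle: the argument is bookkeeping built on one standard identity. The only place demanding mild care is aligning the parameter conventions with the summation, that is, verifying that the common order in Type II really is $o(x)$ (which holds because $x$ lies in one of the $[x_i]$, forcing $o(x)$ to equal the common order) and that the two prime orders in Type IV are exactly $o(x_1)$ and $o(x_2)$; both are immediate from the definitions stated above. I would also remark that Type III is deliberately absent from the observation, presumably because its constituent orders are not captured by a single pair of parameters, so no closed-form count is asserted in that case.
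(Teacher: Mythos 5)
Your proposal is correct and matches the paper's (implicit) reasoning: the paper states this Observation without proof as an immediate consequence of the classification, and the intended justification is exactly your computation, namely $|[x_i]|=\varphi(o(x_i))$ summed over the disjoint $\sim$-classes constituting $\widehat{x}$. Your care in checking that the common order in Type II equals $o(x)$ and that the Type IV parameters are $(o(x_1),o(x_2))$ is consistent with the definitions given in Section 3.
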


\begin{obs}\label{ob2}
	\begin{enumerate}[\normalfont(1)]
		\item  Let $x$ and $y$ be two non-trivial elements in $G$ such that $\left\langle y \right\rangle \subset \left\langle x\right\rangle $. Then $\widehat{x}$ is either of Type~I or of Type~II and $\widehat{y}$ is either of Type~I or of Type~IV.\label{pres of inclusion}
		
		\item Let $x,y \in G$. If $\widehat{x}$ and $\widehat{y}$  are of Type~III, then $\widehat{x}=\widehat{y}$. \label{Type iii}
	\end{enumerate}
\end{obs}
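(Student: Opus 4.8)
The plan is to prove both parts directly from the four-type classification, using only two elementary structural facts: a cyclic group has a nontrivial proper subgroup exactly when its order is composite, and a cyclic subgroup fails to be maximal cyclic exactly when it is properly contained in some cyclic subgroup. Throughout I would identify a $\simeq$-class $\widehat{x}$ with the list $[x_1],\dots,[x_r]$ of distinct $\sim$-classes composing it, as recalled before the statement, and I would use the characterization of $\simeq$ in terms of $N^-_{\overrightarrow{\mathcal{RP}}(G)}$ and $N^+_{\overrightarrow{\mathcal{RP}}(G)}$ quoted from \cite[Proposition 2.2]{raj RPG and PG 2018}. The central point is that each type's definition pins down both the order and the maximality status of the constituent cyclic subgroups $\langle x_i\rangle$, so the hypotheses can be matched against these constraints.

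For part (1), I would first show $\widehat{x}$ is of Type I or II. Since $y\neq e$ and $\langle y\rangle\subset\langle x\rangle$, the cyclic group $\langle x\rangle$ possesses a nontrivial proper subgroup, so $o(x)$ is not prime. As $x\in\widehat{x}$, the subgroup $\langle x\rangle$ is one of the constituents $\langle x_i\rangle$; but in Types III and IV every constituent has prime order, which is incompatible with $o(x)$ being composite. This excludes Types III and IV, leaving Types I and II. Dually, to show $\widehat{y}$ is of Type I or IV, I would observe that $\langle y\rangle\subset\langle x\rangle$ exhibits $\langle y\rangle$ as a proper subgroup of a cyclic group, so $\langle y\rangle$ is not a maximal cyclic subgroup of $G$. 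Since every constituent of a Type II or Type III class is a maximal cyclic subgroup and $\langle y\rangle$ is a constituent of $\widehat{y}$, these two types are excluded, leaving Types I and IV.

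For part (2), the key step is to determine the $\simeq$-class of any generator of a maximal cyclic subgroup of prime order. If $\langle x\rangle$ has prime order, then its only proper cyclic subgroup is $\{e\}$, so $N^+_{\overrightarrow{\mathcal{RP}}(G)}(x)=\{e\}$; if in addition $\langle x\rangle$ is maximal cyclic, then no cyclic subgroup properly contains it, so $N^-_{\overrightarrow{\mathcal{RP}}(G)}(x)=\emptyset$. Thus all such generators share the same in- and out-neighborhoods, and by the characterization of $\simeq$ they all lie in a single $\simeq$-class. Now if $\widehat{x}$ and $\widehat{y}$ are both of Type III, then $x$ and $y$ are each generators of maximal cyclic subgroups of prime order, being constituents of their respective classes, whence $x\simeq y$ and therefore $\widehat{x}=\widehat{y}$.

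I do not anticipate a genuine obstacle: the only care required is in reading each type's definition correctly to extract the order and maximality constraints on the constituents, and, for part (2), in verifying that the profile $N^-_{\overrightarrow{\mathcal{RP}}(G)}(x)=\emptyset$ together with $N^+_{\overrightarrow{\mathcal{RP}}(G)}(x)=\{e\}$ characterizes \emph{exactly} the generators of maximal cyclic subgroups of prime order, so that such elements cannot be split across two distinct classes. No case analysis beyond the four types, and no computation with the group operation, is needed.
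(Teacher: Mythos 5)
Your proof is correct and follows exactly the reasoning the paper leaves implicit: the paper states this as an Observation with no written proof, and the intended justification is precisely your reading of the order and maximality constraints in the type definitions for part (1), and the fact that every generator of a maximal cyclic subgroup of prime order has $N^-_{\overrightarrow{\mathcal{RP}}(G)}(x)=\emptyset$ and $N^+_{\overrightarrow{\mathcal{RP}}(G)}(x)=\{e\}$ for part (2) (a fact the paper itself invokes later, in the proof of Proposition~\ref{order equal of eql class}(1)). No discrepancy to report.
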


%	\begin{pro}\label{Auto thm 3}
	%		Let $G$ be a finite group and let $x\in G$. 
	%		If $\pi\in Aut(\overrightarrow{\mathcal{RP}}(G))$ (resp. $\pi\in Aut\left(\mathcal{RP}(G)\right)$), then $\widehat{x}^\pi=\widehat{x^\pi}$. ******proof obvious ?
	%	\end{pro}

%	\begin{proof}
	%		Let $f\in Aut( \overrightarrow{\mathcal{RP}}(G))$ and let $x_1\in \widehat{[x]}$. Then we have $N_{\overrightarrow{\mathcal{RP}}(G)}^-(x_1)=N_{\overrightarrow{\mathcal{RP}}(G)}^-(x)$ and $N_{\overrightarrow{\mathcal{RP}}(G)}^+(x_1)=N_{\overrightarrow{\mathcal{RP}}(G)}^+(x)$ and so, $N_{\overrightarrow{\mathcal{RP}}(G)}^-(f(x_1))=N_{\overrightarrow{\mathcal{RP}}(G)}^-(f(x))$ and $N_{\overrightarrow{\mathcal{RP}}(G)}^+(f(x_1))=N_{\overrightarrow{\mathcal{RP}}(G)}^+(f(x))$. Consequently, $f(x_1)\in \widehat{[f(x)]}$. Thus $f(\widehat{[x]})\subseteq\widehat{[f(x)]}$. To show the reverse inclusion, let $y\in \widehat{[f(x)]}$. Then there exist $w\in G$ such that $f(w)=y$.  Since $f$ is automorphism, $N_{\overrightarrow{\mathcal{RP}}(G)}^-(w)=N_{\overrightarrow{\mathcal{RP}}(G)}^-(x)$ and $N_{\overrightarrow{\mathcal{RP}}(G)}^+(w)=N_{\overrightarrow{\mathcal{RP}}(G)}^+(x)$, so $w\in \widehat{[x]}$. Therefore, $f(w)=y\in f(\widehat{[x]})$. It follows that $\widehat{[f(x)]}\subset f(\widehat{[x]})$. Thus $f(\widehat{[x]})=\widehat{[f(x)]}$
	%		
	%		Now let $f\in Aut\left(\mathcal{RP}(G)\right)$. Since $N_{\mathcal{RP}(G)}(x)=N_{\overrightarrow{\mathcal{RP}}(G)}^+(x)\cup N_{\overrightarrow{\mathcal{RP}}(G)}^-(x)$, for every $x\in G$, so by a similar argument used above, we get the result.
	%	\end{proof}

It is easy to see that for each $x \in G$ and $\pi \in Aut(\overrightarrow{\mathcal{RP}}(G))$ (resp. $\pi\in Aut\left(\mathcal{RP}(G)\right)$), we have $\widehat{x}^\pi=\widehat{x^\pi}$. Hence $Aut(\overrightarrow{\mathcal{RP}}(G))$ (resp. $Aut(\mathcal{RP}(G))$) induces an action on $\mathscr{R}(G)$ as follows:
\begin{equation}\label{action3}
	Aut(\overrightarrow{\mathcal{RP}}(G)) \times\mathscr{R}(G)  \rightarrow \mathscr{R}(G),\hspace{0.5cm} (\pi, \widehat{x}) \mapsto \widehat{x^\pi}
\end{equation}
and
\begin{equation}\label{action4}
	Aut(\mathcal{RP}(G)) \times \mathscr{R}(G)  \rightarrow \mathscr{R}(G),\hspace{0.5cm} (\pi, \widehat{x}) \mapsto \widehat{x^\pi}.
\end{equation}

Next we shall show that each orbit of $Aut(\mathcal{RP}(G))$ on $\mathscr{R}(G)$ consists of some equivalence classes of the same type. 

\begin{lemma}\label{Auto lemma2}
	Let $p$, $q$ be distinct primes. Let $x$ and $y$ be two non-trivial elements in $G$ such that $\left\langle y \right\rangle \subset \left\langle x\right\rangle $. Then we have the following.
	\begin{enumerate}[\normalfont(1)]
		\item If $\widehat{x}$ is of Type~I, then one of the following holds:
		
		\begin{enumerate}[\normalfont(a)]
			\item $\mid \widehat{y}\mid \leq \mid \widehat{x}\mid $, if $\widehat{y}$ is either of Type~I or of Type~IV with parameter $(p,q)$, where $p,q\neq 2$. Equality holds if both $\widehat{x}$ and $\widehat{y}$ are of Type~I with $o(x)=2 \cdot o(y)$ and $o(y)$ is odd.
			
			\item $\mid \widehat{y}\mid >\mid \widehat{x}\mid $, if $\widehat{y}$ is of Type~IV with parameter $(2,p)$ and $o(x)=2p$, where $p$ is a prime.
		\end{enumerate}
		
		\item If $\widehat{x}$  is of Type~II, then $\mid \widehat{y}\mid <\mid \widehat{x}\mid $. 
	\end{enumerate}
\end{lemma}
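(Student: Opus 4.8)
The plan is to reduce every assertion to the explicit size formulas of Observation~\ref{ob1} together with two auxiliary facts: an arithmetic fact about Euler's totient, and a structural fact extracting divisibility information about $o(x)$ from the type of $\widehat{y}$. Throughout, the inclusion $\langle y\rangle\subset\langle x\rangle$ gives $o(y)\mid o(x)$ and $o(y)<o(x)$, and, since it is proper, an arc from $x$ to $y$ in $\overrightarrow{\mathcal{RP}}(G)$, so $x\in N^-_{\overrightarrow{\mathcal{RP}}(G)}(y)$.

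First I would record the arithmetic lemma: if $d\mid n$ then $\varphi(d)\mid\varphi(n)$, and in particular $\varphi(d)\le\varphi(n)$; moreover, for $d\mid n$ with $d<n$, equality $\varphi(d)=\varphi(n)$ holds if and only if $n=2d$ with $d$ odd. I would prove this from the multiplicative formula $\varphi(n)=n\prod_{p\mid n}(1-1/p)$ by comparing $p$-adic valuations: writing the ratio $\varphi(n)/\varphi(d)=(n/d)\prod_{p\mid n,\,p\nmid d}(1-1/p)$, equality forces $n$ and $d$ to share the same power of every prime dividing $d$, and then the contribution of each new prime $p$ (with exponent $c\ge1$) is $p^{c-1}(p-1)$, which equals $1$ only for $p=2$, $c=1$; this pins down $n=2d$ with $d$ odd.

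The key structural step is the following: if $\widehat{y}$ is of Type IV with the two prime orders $p$ and $q$, then $pq\mid o(x)$. Indeed, $\widehat{y}$ is a union of two $\sim$-classes consisting of elements of order $p$ and of order $q$ respectively, and all elements of a $\simeq$-class share the same in-neighborhood in $\overrightarrow{\mathcal{RP}}(G)$. Since $x\in N^-_{\overrightarrow{\mathcal{RP}}(G)}(y)$, it follows that $\langle y'\rangle\subset\langle x\rangle$ for every $y'\in\widehat{y}$; choosing $y'$ of order $p$ and of order $q$ gives $p\mid o(x)$ and $q\mid o(x)$, hence $pq\mid o(x)$ and $\varphi(o(x))\ge\varphi(pq)=(p-1)(q-1)$.

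With these in hand the cases are routine. For (1), $|\widehat{x}|=\varphi(o(x))$ by Observation~\ref{ob1}. If $\widehat{y}$ is Type I then $|\widehat{y}|=\varphi(o(y))\le\varphi(o(x))$, with the stated equality characterization coming from the arithmetic lemma. If $\widehat{y}$ is Type IV with $p,q\neq2$, then $|\widehat{y}|=p+q-2$ and the structural step gives $|\widehat{x}|\ge(p-1)(q-1)>p+q-2$, using $(p-1)(q-1)-(p+q-2)=(p-2)(q-2)-1>0$ for distinct odd primes; this is (a), strictly. For (b), $o(x)=2p$ gives $|\widehat{x}|=\varphi(2p)=p-1$ while $|\widehat{y}|=p$, so $|\widehat{y}|>|\widehat{x}|$. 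For (2), $|\widehat{x}|=r\varphi(m)$ with $m=o(x)$ and $r\ge2$; if $\widehat{y}$ is Type I then $|\widehat{x}|\ge2\varphi(m)>\varphi(m)\ge\varphi(o(y))=|\widehat{y}|$, and if $\widehat{y}$ is Type IV then $|\widehat{x}|\ge2(p-1)(q-1)>p+q-2=|\widehat{y}|$, the last inequality being elementary and holding even when $p=2$. The main obstacle is the structural step: everything else is arithmetic, but one must correctly exploit the $\simeq$-class in-neighborhood equality to force both primes to divide $o(x)$, and keep track of the exceptional behavior of $\varphi$ at the prime $2$, which is exactly what makes case (b) reverse the inequality.
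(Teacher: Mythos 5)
Your proof is correct and takes essentially the same route as the paper's: the size formulas of Observation~\ref{ob1} combined with the totient inequality $\varphi(d)\le\varphi(n)$ for $d\mid n$ and its equality case, followed by the same case analysis. The one point worth noting is that your explicit structural step --- that $pq\mid o(x)$ when $\widehat{y}$ is of Type IV, deduced from the common in-neighborhood of a $\simeq$-class --- is used only implicitly in the paper's proof (it is what justifies $\left|\widehat{x}\right|\ge(p-1)(q-1)$ there), so making it explicit is a small but genuine improvement in rigor rather than a different approach.
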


\begin{proof}
	By Observation~\ref{ob2}(1), $\widehat{x}$ is either of Type~I or of Type~II, and $\widehat{y}$ is either of Type~I or of Type~IV. 
	
	(1) Suppose that $\widehat{x}$ is either of Type~I. 
	
	If $\widehat{y}$ is of Type~I, then $\mid \widehat{y}\mid =\varphi(o(y))$ and also $\mid \widehat{x}\mid =r\cdot \varphi(o(x))$, where $r\geq 1$. It is known that for any two positive integers $m$ and $n$, if $m\mid n$, then $\varphi(m)\leq \varphi(n)$; equality holds if either $n=m$ or $m$ is odd and $n=2m$.  Since $o(y)\mid o(x)$, it follows that $\varphi(o(y))\leq \varphi(o(x))$. So $\mid \widehat{y}\mid =\varphi(o(y))\leq \varphi(o(x))\leq r\cdot \varphi(o(x))= \mid \widehat{x}\mid $; equality holds if $r=1$, $o(x)=2\cdot o(y)$ and $o(y)$ is odd, since $o(x)\neq o(y)$.
	
	If $\widehat{y}$ is of Type~IV with parameter $(p,q)$, then $\mid \widehat{y}\mid =p+q-2$. If $p,q\neq 2$, then $p+q-2<(p-1)(q-1)$. So $\mid \widehat{y}\mid <\mid \widehat{x}\mid $. Suppose $q=2$. Then $\mid \widehat{y}\mid =p$. If $o(x)=2p$ and $r=1$, then $\mid \widehat{x}\mid =(p-1)<p=\mid \widehat{y}\mid $. If $o(x)=2p$ and $r> 1$, then $\mid \widehat{x}\mid=r\cdot (p-1) > p=\mid \widehat{y}\mid $. If $o(x)\neq 2p$, then $\mid \widehat{x}\mid > p=\mid \widehat{y}\mid $. 
	
	(2) If $\widehat{x}$ is of Type~II with parameter $(m,r)$, then $\widehat{y}$ is of Type~I. By parts (1) and (2) of Observation~\ref{ob1}, $\mid \widehat{x}\mid=r.\varphi(m)$ and $\mid \widehat{y}\mid=\varphi(o(y))\leq \varphi(m)$.  So $\mid \widehat{y}\mid <\mid \widehat{x}\mid $ as desired.
\end{proof}

\begin{lemma}(\cite[Lemma 2.1]{raj RPG and PG 2018})\label{ZQ}
	Let $e$ be the identity element of $G$. Let $S$ be the set of all vertices of $\mathcal{RP}(G)$ which are adjacent to all other vertices of $\mathcal{RP}(G)$.Then either $S=\{e\}$ or $G = \mathbb Z_{2^m}$$( m \geq 1)$ or $Q_{2^\alpha}$$(\alpha \geq 3)$ and $S=\{e,a\}$, where $a$ is the unique element of order $2$.
\end{lemma}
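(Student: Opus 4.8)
The plan is to analyse which vertices can be adjacent to every other vertex, recalling that in $\mathcal{RP}(G)$ two distinct vertices $u,v$ are adjacent exactly when one of $\langle u\rangle,\langle v\rangle$ is \emph{strictly} contained in the other. First note that $e\in S$ always: since $\langle e\rangle=\{e\}\subsetneq\langle v\rangle$ for every $v\neq e$, the identity is adjacent to all other vertices. Hence the entire content of the lemma is to determine $S\setminus\{e\}$, and I would show that this set is empty unless $G\cong\mathbb Z_{2^m}$ or $Q_{2^\alpha}$, in which case it is the singleton consisting of the unique involution. A pleasant feature of this route is that it treats cyclic and non-cyclic $G$ uniformly, with no case split.

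So fix $x\in S$ with $x\neq e$. The first and decisive observation is that strictness of the inclusion forces $\langle x\rangle$ to have a unique generator: if some $v\neq x$ satisfied $\langle v\rangle=\langle x\rangle$, then neither strict inclusion would hold, so $x$ and $v$ would be non-adjacent, contradicting $x\in S$. Thus $\varphi(o(x))=1$, which forces $o(x)=2$. Consequently every nontrivial element of $S$ is an involution.

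Next I would translate the remaining adjacency requirements. Since $o(x)=2$ we have $\langle x\rangle=\{e,x\}$, so for any $v\notin\{e,x\}$ the only way for $\langle x\rangle$ and $\langle v\rangle$ to be comparable is $\langle x\rangle\subsetneq\langle v\rangle$, i.e. $x\in\langle v\rangle$. Therefore $x\in S\setminus\{e\}$ forces $x$ to lie in \emph{every} nontrivial cyclic subgroup of $G$. In particular, if $G$ had an element $y$ of odd prime order $p$, then $x\in\langle y\rangle$ would put an element of order $2$ inside a group of odd order $p$, which is impossible; by Cauchy's theorem $G$ must be a $2$-group. Moreover any involution $y$ satisfies $x\in\langle y\rangle=\{e,y\}$, whence $y=x$, so the involution is unique.

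Finally I would invoke the classical classification that a finite $2$-group with a unique involution is cyclic or generalized quaternion, giving $G\cong\mathbb Z_{2^m}$ or $Q_{2^\alpha}$. For the converse it remains to check that in each of these groups the unique involution $a$ genuinely lies in $S$; this holds because in both families the order-$2$ subgroup $\langle a\rangle$ is contained in every nontrivial cyclic subgroup, so $\langle a\rangle\subsetneq\langle v\rangle$ for every $v\neq e,a$ while $\langle e\rangle\subsetneq\langle a\rangle$, making $a$ adjacent to all other vertices. Combining the two directions yields $S=\{e,a\}$. The only non-elementary ingredient is the classification of $2$-groups with a unique involution; everything else is a direct unwinding of the strict-inclusion adjacency rule, and I expect the only real care to be in tracking strictness (the unique-generator step) rather than in any computation.
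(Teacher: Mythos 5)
Your proof is correct. Note that the paper itself gives no argument for this lemma --- it is imported verbatim from \cite[Lemma 2.1]{raj RPG and PG 2018} --- so there is no in-paper proof to compare against; judged on its own, your argument is complete and clean. The two key steps both check out: strictness of the inclusion in the adjacency rule forces any non-identity $x\in S$ to satisfy $\varphi(o(x))=1$, hence $o(x)=2$; and since $\langle x\rangle\subsetneq\langle x\rangle$ is impossible while $\langle v\rangle\subsetneq\{e,x\}$ forces $v=e$, such an $x$ must lie in every nontrivial cyclic subgroup, which rules out elements of odd prime order (Cauchy) and forces the involution to be unique. The only nontrivial external input is the classification of finite $2$-groups with a unique involution as cyclic or generalized quaternion, which you correctly flag, and the converse verification (that the unique minimal subgroup of $\mathbb Z_{2^m}$ and $Q_{2^\alpha}$ lies in every nontrivial cyclic subgroup) is also needed and is present. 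One tiny point worth making explicit: since every non-identity element of $S$ is an involution and the involution is unique, you get $S\subseteq\{e,a\}$ directly, which together with the converse gives $S=\{e,a\}$ exactly --- you do say this, so nothing is missing.
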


\begin{lemma}\label{image of e}
	\begin{enumerate}[\normalfont(1)]
		\item Let $\pi\in Aut({\mathcal{RP}}(G))$. Then we have the following. \label{Auto thm 1}
		\begin{enumerate}[\normalfont(i)]
			\item If $G\ncong \mathbb Z_{2^m}(m\geq 1)$ and $Q_{2^\alpha}(\alpha\geq 3)$, then $\pi$ fixes $e$.
			
			\item If $G\cong \mathbb Z_{2^m}(m\geq 1)$ or $Q_{2^\alpha}(\alpha\geq 3)$, then $e^\pi=e$ or $a$, where $a$ is the unique element of order $2$ in $G$.
		\end{enumerate}
		\item If $\pi\in Aut( \overrightarrow{\mathcal{RP}}(G)) $, then $\pi$ fixes $e$. \label{auto of RP fix e}
	\end{enumerate}
\end{lemma}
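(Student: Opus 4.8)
The plan is to pin down $e$ by an extremal degree condition: for the undirected graph via Lemma~\ref{ZQ}, and for the digraph via the orientation itself.

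First, for part~(1) I would observe that in $\mathcal{RP}(G)$ the identity is adjacent to every other vertex, since for each non-identity $x$ we have $\langle e\rangle=\{e\}\subset\langle x\rangle$, so $e$ and $x$ are joined by an edge. Hence $e$ belongs to the set $S$ of Lemma~\ref{ZQ}. Any $\pi\in Aut(\mathcal{RP}(G))$ preserves vertex degrees, so it carries a vertex of degree $|G|-1$ (equivalently, one adjacent to all other vertices) to another such vertex; therefore $S^\pi=S$. Part~(1) then follows by splitting along the dichotomy of Lemma~\ref{ZQ}: if $G\ncong\mathbb Z_{2^m}$ and $G\ncong Q_{2^\alpha}$, then $S=\{e\}$, and $e^\pi\in S^\pi=S=\{e\}$ forces $e^\pi=e$, proving~(i); otherwise $S=\{e,a\}$ with $a$ the unique involution, and $e^\pi\in S=\{e,a\}$ proves~(ii).

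For part~(2) I would instead exploit the orientation, which breaks the symmetry between $e$ and $a$ that the undirected graph cannot see. In $\overrightarrow{\mathcal{RP}}(G)$ the out-neighbors of a vertex $v$ are the $w$ with $\langle w\rangle\subset\langle v\rangle$. Since $\langle e\rangle=\{e\}$ has no proper subgroup, $e$ has out-degree $0$; whereas for every non-identity $x$ the identity itself satisfies $\langle e\rangle\subset\langle x\rangle$, so $deg_{\overrightarrow{\mathcal{RP}}(G)}^{+}(x)\geq 1$. Thus $e$ is the unique vertex of out-degree $0$. As any $\pi\in Aut(\overrightarrow{\mathcal{RP}}(G))$ preserves out-degrees, it must fix this distinguished vertex, that is $e^\pi=e$.

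There is no real obstacle here; the only conceptual point is that in the undirected graph $e$ and the involution $a$ of $\mathbb Z_{2^m}$ or $Q_{2^\alpha}$ are genuinely indistinguishable, both being adjacent to all vertices, which is exactly why~(ii) must permit $e^\pi=a$. The orientation removes this ambiguity because $a$ acquires out-degree $1$, its single out-neighbor being $e$, so the argument for~(2) needs no case distinction and applies uniformly to every finite group.
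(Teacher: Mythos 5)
Your proposal is correct and follows essentially the same route as the paper: part~(1) via degree preservation and the dichotomy of Lemma~\ref{ZQ}, and part~(2) via the observation that $e$ is the unique vertex of out-degree $0$ in $\overrightarrow{\mathcal{RP}}(G)$. The additional remark explaining why the undirected case genuinely cannot separate $e$ from the involution is a nice touch but not needed for the argument.
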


\begin{proof}
	(1) Notice that $deg_{\mathcal{RP}(G)}(e)=deg_{\mathcal{RP}(G)}(e^\pi)$. Also, $deg_{\mathcal{RP}(G)}(e)=|G|-1$. So $e^\pi$ is adjacent to all the vertices of $\mathcal{RP}(G)$. Thus the result follows from Lemma~\ref{ZQ}.
	
	(2) Since $deg_{\overrightarrow{\mathcal{RP}}(G)}^+(e)=0$ and $deg_{\overrightarrow{\mathcal{RP}}(G)}^+(x)\geq 1$ for every $(e\neq ) x\in G$, so $e^\pi=e$.
\end{proof}

\begin{lemma}(\cite[In the proof of Theorem 3.2]{raj RPG and PG 2018})\label{relation of 2 eql class}
	Let $x$ and $y$ be two non-trivial elements in $G$ such that they are adjacent in $\mathcal{RP}(G)$. Then one of the following holds:
	\begin{enumerate}[\normalfont(1)]
		\item    $\mid \widehat{x}\mid >\mid \widehat{y}\mid $ with $\mid \widehat{x}\mid $ is even and $\left\langle y \right\rangle \subset \left\langle x \right\rangle $,
		
		\item  $\mid \widehat{x}\mid >\mid \widehat{y}\mid $ with $\mid \widehat{x}\mid $ is odd and $\left\langle x \right\rangle \subset \left\langle y \right\rangle $,
		\item $\mid \widehat{x}\mid =\mid \widehat{y}\mid $ with either $deg_{\mathcal{RP}(G)}(x)$ is even, $deg_{\mathcal{RP}(G)}(y)$ is odd and $\left\langle y \right\rangle \subset \left\langle x \right\rangle $, or $deg_{\mathcal{RP}(G)}(x)$ is odd, $deg_{\mathcal{RP}(G)}(y)$ is even and $\left\langle x \right\rangle \subset \left\langle y \right\rangle $.
	\end{enumerate}
\end{lemma}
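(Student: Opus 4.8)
The plan is to use the fact that adjacency in $\mathcal{RP}(G)$ already forces a strict inclusion between $\langle x\rangle$ and $\langle y\rangle$, and then to recover the \emph{direction} of that inclusion from the parity of the larger $\simeq$-class and, in the borderline case, from the parities of the degrees. Since $x$ and $y$ are adjacent, exactly one of $\langle y\rangle\subset\langle x\rangle$ or $\langle x\rangle\subset\langle y\rangle$ holds. Interchanging $x$ and $y$ swaps the two alternatives inside each item, so I may assume throughout that $|\widehat{x}|\geq|\widehat{y}|$ and organise the argument according to whether $|\widehat{x}|>|\widehat{y}|$ or $|\widehat{x}|=|\widehat{y}|$.

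The first thing I would record is a parity fact: if $\langle v\rangle\subset\langle u\rangle$ with $v$ non-trivial, then $o(u)\geq 3$, so $\varphi(o(u))$ is even; and since $\widehat{u}$ is of Type I or Type II by Observation~\ref{ob2}(1), its cardinality $\varphi(o(u))$ or $r\varphi(o(u))$ (Observation~\ref{ob1}) is even. Thus the class of the element generating the larger cyclic subgroup always has even size. Now suppose $|\widehat{x}|>|\widehat{y}|$. If $\langle y\rangle\subset\langle x\rangle$, this parity fact gives $|\widehat{x}|$ even, which is item (1). If instead $\langle x\rangle\subset\langle y\rangle$, then Lemma~\ref{Auto lemma2} applied to the pair $(y,x)$ (so that $y$ generates the larger subgroup) shows $|\widehat{x}|\leq|\widehat{y}|$ unless we are in the exceptional configuration of Lemma~\ref{Auto lemma2}(1)(b), namely $\widehat{x}$ of Type IV with parameter $(2,p)$ and $o(y)=2p$; there $|\widehat{x}|=p$ is odd, which is item (2).

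For the equality case $|\widehat{x}|=|\widehat{y}|$, Lemma~\ref{Auto lemma2} forces both classes to be of Type I (Type II would give a strict inequality), and the equality clause of Lemma~\ref{Auto lemma2}(1)(a) gives $o(x)=2\,o(y)$ with $o(y)$ odd, taking $\langle y\rangle\subset\langle x\rangle$; the reversed inclusion is symmetric. Here I would compute the degree parities directly. For any $w$ the neighbours split into the $z$ with $\langle z\rangle\subset\langle w\rangle$, of which there are exactly $o(w)-\varphi(o(w))$, and the $z$ with $\langle w\rangle\subset\langle z\rangle$, of which there are $\sum_{\langle w\rangle\subset C}\varphi(|C|)$ over the cyclic overgroups $C$. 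Every such $C$ satisfies $|C|>o(w)\geq 3$, so each $\varphi(|C|)$ is even and the upward contribution is unconditionally even; hence $deg_{\mathcal{RP}(G)}(w)$ has the parity of $o(w)-\varphi(o(w))$. Since $o(x)=2\,o(y)$ makes $o(x)-\varphi(o(x))$ even and $o(y)$ odd makes $o(y)-\varphi(o(y))$ odd, I obtain $deg_{\mathcal{RP}(G)}(x)$ even and $deg_{\mathcal{RP}(G)}(y)$ odd with $\langle y\rangle\subset\langle x\rangle$, which is the first alternative of item (3); the reversed inclusion gives the second.

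The routine parts are the size comparisons, supplied verbatim by Lemma~\ref{Auto lemma2}, and the elementary parity of $\varphi$. The main point requiring care is the degree computation in the equality case: one must decompose the degree into its downward and upward parts and notice that the upward part is automatically even, because every cyclic subgroup strictly containing $\langle w\rangle$ has order exceeding $2$. A secondary subtlety is the bookkeeping of the symmetry, since the genuinely asymmetric situation is the exceptional Type IV$(2,p)$ family; one has to verify that relabelling the larger-class element as $x$ places that family squarely into item (2) rather than leaving it unaccounted for.
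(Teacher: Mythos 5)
The paper offers no proof of this lemma at all: it is imported verbatim from \cite[proof of Theorem 3.2]{raj RPG and PG 2018}, so there is nothing internal to compare your argument against. Read as a self-contained derivation from Observations~\ref{ob1}--\ref{ob2} and Lemma~\ref{Auto lemma2}, your proof is correct. The two pillars both check out: (i) the parity fact that the element generating the \emph{larger} cyclic subgroup always has an even $\simeq$-class (its size is a multiple of $\varphi(o(x))$ with $o(x)\geq 4$, since $\langle y\rangle$ is a proper nontrivial subgroup), which together with the exceptional Type~IV$(2,p)$ configuration of Lemma~\ref{Auto lemma2}(1)(b) accounts for items (1) and (2); and (ii) the decomposition $deg_{\mathcal{RP}(G)}(w)=\bigl(o(w)-\varphi(o(w))\bigr)+\sum_{\langle w\rangle\subsetneq C}\varphi(|C|)$ with the upward sum automatically even, which yields the degree parities in item (3). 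Two small points deserve flagging. First, the lemma as printed is not symmetric in $x$ and $y$ and is literally vacuous (indeed false) when $|\widehat{x}|<|\widehat{y}|$; your WLOG relabelling is the correct reading and matches how the lemma is invoked in Lemma~\ref{Auto presetve suset}. Second, in the equality case you use the \emph{converse} of the equality clause of Lemma~\ref{Auto lemma2}(1)(a) (equality forces both classes Type~I with $o(x)=2\,o(y)$, $o(y)$ odd); that clause is stated only as a sufficient condition, but its proof does establish necessity (the Type~IV comparisons are strict, and $\varphi(m)=\varphi(n)$ with $m\mid n$, $m<n$ forces $n=2m$ with $m$ odd), so your reliance on it is justified though worth a sentence of justification in a final write-up.
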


\begin{lemma}\label{Auto presetve suset}
	
	\begin{enumerate}[\normalfont(1)]
		\item Let $\pi\in Aut(\overrightarrow{{\mathcal{RP}}}(G))$ and let $x,y\in G$. Then $\left\langle y \right\rangle \subset \left\langle x\right\rangle $ if and only if $\left\langle y^\pi\right\rangle \subset \left\langle x^\pi\right\rangle $.
		
		\item Let $\pi\in Aut\left(\mathcal{RP}(G)\right)$ and let $x$, $y$ be two non-trivial elements in $G$. Then $\left\langle y \right\rangle \subset \left\langle x\right\rangle $ if and only if $\left\langle y^\pi\right\rangle \subset \left\langle x^\pi\right\rangle $. 
	\end{enumerate}
\end{lemma}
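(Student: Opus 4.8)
The plan is to handle the two parts by different means: part~(1) is essentially the arc condition read off directly, while part~(2) requires reconstructing the orientation of an inclusion from purely undirected data. For part~(1) I would argue straight from the definition of $\overrightarrow{\mathcal{RP}}(G)$: there is an arc from $x$ to $y$ exactly when $\langle y\rangle\subset\langle x\rangle$, since strict inclusion makes $y$ a power of $x$ with $\langle y\rangle\ne\langle x\rangle$, and conversely. As $\pi\in Aut(\overrightarrow{\mathcal{RP}}(G))$ preserves arcs, there is an arc from $x$ to $y$ if and only if there is an arc from $x^\pi$ to $y^\pi$; translating both sides back through the arc condition gives $\langle y\rangle\subset\langle x\rangle\iff\langle y^\pi\rangle\subset\langle x^\pi\rangle$, with no further input needed.

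For part~(2) the obstruction is that an automorphism of the \emph{undirected} graph preserves only adjacency, not orientation, so I must recover the direction of inclusion from invariants that $\pi$ respects. It suffices to prove the forward implication, the converse being the forward implication applied to $\pi^{-1}$. Assuming $\langle y\rangle\subset\langle x\rangle$ with $x,y$ non-trivial, the elements $x,y$ are adjacent in $\mathcal{RP}(G)$, hence so are $x^\pi,y^\pi$. Two invariants survive $\pi$: class sizes, because the identity $\widehat{x}^\pi=\widehat{x^\pi}$ gives a bijection $\widehat{x}\to\widehat{x^\pi}$ and hence $|\widehat{x}|=|\widehat{x^\pi}|$ (likewise for $y$); and degree parities, since $\pi$ preserves $deg_{\mathcal{RP}(G)}$.

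The engine is Lemma~\ref{relation of 2 eql class}, which I read as a rule that, for any adjacent non-trivial pair, determines which cyclic subgroup is the larger from the data $(|\widehat{\cdot}|,\,deg_{\mathcal{RP}(G)}(\cdot))$ alone. When the class sizes differ, the orientation is governed by the larger class: applying the lemma with the larger-class element as its first argument, case~(1) (larger class even) gives that its subgroup contains the other, while case~(2) (larger class odd) gives that it is contained in the other. When the class sizes agree, case~(3) uses degree parity, the even-degree element being the one whose subgroup contains the other. Applying this rule to $x,y$ places the pair in one of these cases, and the data defining that case — which class is larger together with its parity, or the two degree parities — is identical for $x^\pi,y^\pi$. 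Hence Lemma~\ref{relation of 2 eql class} assigns $x^\pi,y^\pi$ the same orientation, forcing $\langle y^\pi\rangle\subset\langle x^\pi\rangle$.

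I expect the main obstacle to be the triviality bookkeeping in the exceptional groups. For $G\ncong\mathbb Z_{2^m},Q_{2^\alpha}$, Lemma~\ref{image of e} fixes $e$, so $x^\pi,y^\pi$ stay non-trivial and Lemma~\ref{relation of 2 eql class} applies verbatim. In the exceptional families $\pi$ may instead swap $e$ with the unique involution $a$, and there one must check that $x\ne a$ whenever a non-trivial $y$ satisfies $\langle y\rangle\subset\langle x\rangle$ (so that $x^\pi\ne e$), and dispose of the degenerate sub-case $y^\pi=e$ via the trivial inclusion $\{e\}=\langle y^\pi\rangle\subset\langle x^\pi\rangle$. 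Confirming that the invariant-matching supplied by Lemma~\ref{relation of 2 eql class} is genuinely exhaustive — that every adjacent pair with $\langle y\rangle\subset\langle x\rangle$ falls under a case whose orientation is pinned down by the preserved data — is the point where I would be most careful.
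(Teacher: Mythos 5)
Your proof is correct, and part (2) takes a genuinely cleaner route than the paper's. Part (1) is identical: both read the strict inclusion off the arc condition. For part (2) the paper also makes Lemma~\ref{relation of 2 eql class} the engine, but it reaches it only after a case analysis on the Types of $\widehat{x}$ and $\widehat{y}$ (via Observation~\ref{ob2} and Lemma~\ref{Auto lemma2}), and in its Subcase~1a it argues by contradiction, pinning down what $\widehat{x^\pi}$ and $\widehat{y^\pi}$ would have to be before invoking Lemma~\ref{relation of 2 eql class}. You instead observe that Lemma~\ref{relation of 2 eql class} is already a deterministic decision rule: for any adjacent non-trivial pair its three alternatives are mutually exclusive and exhaustive (after ordering the pair by class size), so the orientation of the inclusion is a function of $\bigl(|\widehat{x}|,|\widehat{y}|,\deg x \bmod 2,\deg y \bmod 2\bigr)$, all of which are $\pi$-invariant because $\widehat{x}^\pi=\widehat{x^\pi}$ and graph automorphisms preserve degree. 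This bypasses Lemma~\ref{Auto lemma2} and the Type bookkeeping entirely and makes the "exhaustiveness" worry you flag the only thing left to check; it does hold, since adjacency in $\mathcal{RP}(G)$ is by definition witnessed by an inclusion one way or the other. Your caution about the exceptional groups $\mathbb Z_{2^m}$ and $Q_{2^\alpha}$, where $\pi$ may swap $e$ with the involution $a$ so that $y^\pi$ can be trivial, is warranted --- the paper's proof silently assumes $x^\pi,y^\pi$ are non-trivial when applying Lemma~\ref{relation of 2 eql class} --- and your disposal of that case via $\langle y^\pi\rangle=\{e\}\subset\langle x^\pi\rangle$ (noting $x\neq a$ forces $x^\pi\neq e$) actually patches a gap the paper leaves open; just note that the converse direction then needs the same degenerate case handled directly rather than by applying the forward implication to $\pi^{-1}$, since $x^\pi$ or $y^\pi$ may fail the non-triviality hypothesis.
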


\begin{proof}
	(1) The result follows from the fact that $(x,y)\in A({\overrightarrow{\mathcal{RP}}(G)})$ if and only if $(x^\pi,y^\pi)\in A({\overrightarrow{\mathcal{RP}}(G)})$.
	
	(2) Let $x$ and $y$ be two non-trivial elements in $G$ such that $\left\langle y\right\rangle \subset \left\langle x\right\rangle $. We show that either $\left\langle y^\pi\right\rangle \subset \left\langle x^\pi\right\rangle $ or $\left\langle x^\pi\right\rangle \subset \left\langle y^\pi\right\rangle $. Notice that $\widehat{x}$ is either of Type~I or of Type~II and $\widehat{y}$ is either of Type~I or of Type IV. So we consider the following cases.
	
	\textit{Case} 1. Let $\widehat{x}$ be of Type~I. Then by Lemma~\ref{Auto lemma2}(1), $\mid \widehat{y}\mid >\mid \widehat{x}\mid $, if $\widehat{y}$ is of Type~IV with parameter $(2,p)$, where $p$ is a prime; $\mid \widehat{x}\mid \geq\mid \widehat{y}\mid $, otherwise. 
	
	\textit{Subcase} 1a. If $\widehat{y}$ is not of Type~IV with parameter $(2,p)$, then $\mid \widehat{x^\pi}\mid \geq\mid \widehat{y^\pi}\mid $, since $\mid \widehat{x}\mid \geq\mid \widehat{y}\mid $. Suppose $\left\langle x^\pi\right\rangle \subset \left\langle y^\pi\right\rangle $. Then $\widehat{y^\pi}$ is either of Type~I or of Type~II. By Lemma~\ref{Auto lemma2}, $\widehat{x^\pi}$ is of Type~IV with parameter $(2,q)$ and $\widehat{y^\pi}$ is of Type~ I with $o(y)=2q$, where $q$ is an odd prime. It follows that $\mid \widehat{y^\pi}\mid =(q-1)$ and $\mid \widehat{x^\pi}\mid =q$ and so $\mid \widehat{x}\mid $ is odd and $\mid \widehat{x}\mid >\mid \widehat{y}\mid $. Then by Lemma~\ref{relation of 2 eql class}, $\left\langle x\right\rangle \subset \left\langle y\right\rangle $, which is a contradiction. So $\left\langle y^\pi\right\rangle \subset \left\langle x^\pi\right\rangle $. 
	
	\textit{Subcase} 1b. If $\widehat{y}$ is of Type~IV with parameter $(2,p)$, then $\mid \widehat{y}\mid =p$. Since $\mid \widehat{y}\mid >\mid \widehat{x}\mid $, $\mid \widehat{y^\pi}\mid >\mid \widehat{x^\pi}\mid $ and $\mid \widehat{y^\pi}\mid $ is odd. By Lemma~\ref{relation of 2 eql class}, $\left\langle y^\pi\right\rangle \subset \left\langle x^\pi\right\rangle $.
	
	\textit{Case} 2. Suppose $\widehat{x}$ is of Type~II. By Lemma~\ref{Auto lemma2}(2), $\mid \widehat{x}\mid >\mid \widehat{y}\mid $. Then by a similar argument used in Subcase 1a, we get $\left\langle y^\pi\right\rangle \subset \left\langle x^\pi\right\rangle $.
	
	By a similar argument, we can show that if $\left\langle y^\pi\right\rangle \subset \left\langle x^\pi\right\rangle $, then $\left\langle y\right\rangle \subset \left\langle x\right\rangle $.
\end{proof}

\begin{pro}\label{order equal of eql class}
	Let $x\in G$ and $\pi\in Aut( \overrightarrow{\mathcal{RP}}(G))$. Then we have the following.
	\begin{enumerate}[\normalfont(1)]
		\item If $\widehat{x}$ is of Type~III, then $\widehat{x^\pi}=\widehat{x}$.
		
		\item If $\widehat{x}$ is of Type~II, then $\widehat{x^\pi}$ is  of Type~II with $o(x)=o(x^\pi)$.
		
		\item If $\widehat{x}$ is of Type~I, then $\widehat{x^\pi}$ is of Type~I with $o(x)=o(x^\pi)$.

		\item If $\widehat{x}$ is of Type~IV with parameter $(p,q)$, then $\widehat{x^\pi}$ is of Type~IV with parameter $(p,q)$.		
	\end{enumerate}
\end{pro}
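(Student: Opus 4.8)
The plan is to exhibit a short list of digraph invariants preserved by every $\pi\in Aut(\overrightarrow{\mathcal{RP}}(G))$ and to read off the four assertions from them. First I would record the preserved data. By Lemma~\ref{image of e}(2), $\pi$ fixes $e$, and since $\pi$ permutes arcs it preserves $deg^{+}$ and $deg^{-}$. For nontrivial $x$ the out-neighbours of $x$ are exactly the non-generators of $\langle x\rangle$, so $deg^{+}_{\overrightarrow{\mathcal{RP}}(G)}(x)=o(x)-\varphi(o(x))$; in particular $deg^{+}(x)=1$ iff $o(x)$ is prime, while $\langle x\rangle$ is maximal cyclic iff $deg^{-}(x)=0$. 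Thus $\pi$ preserves the properties ``$o(x)$ is prime'' and ``$\langle x\rangle$ is maximal cyclic''. Moreover $\pi$ maps $\widehat{x}$ bijectively onto $\widehat{x^\pi}$, so $|\widehat{x}|=|\widehat{x^\pi}|$, and by Lemma~\ref{Auto presetve suset}(1) it preserves strict inclusion, hence incomparability, of cyclic subgroups. I will use repeatedly that $\varphi(n)=\varphi(n')$ together with $n-\varphi(n)=n'-\varphi(n')$ forces $n=n'$, and that two prime powers $n=p^{k}$, $n'=(p')^{k'}$ with $k,k'\ge 2$ and $n-\varphi(n)=n'-\varphi(n')$ must be equal.

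The key observation is that the set $M$ of all maximal, prime-order elements forms a \emph{single} $\simeq$-class: each such element has $N^{+}=\{e\}$ and $N^{-}=\emptyset$, so any two are $\simeq$-equivalent. As membership in $M$ is $\pi$-invariant, $M$ is fixed setwise by $\pi$, and any class all of whose elements are maximal of prime order must equal $M$. This disposes of several cases at once: a Type III class is maximal of prime order, hence equals $M$ and is $\pi$-fixed, giving (1) (consistently with the uniqueness in Observation~\ref{ob2}(2)); and a Type II class of prime order $m$, or a Type I class that is maximal of prime order, likewise equals $M=\widehat{x^\pi}$, so the order is preserved since $x^\pi\in M$ carries the common prime order.

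It remains to treat classes not consisting of maximal prime-order elements. If $x$ is maximal with $deg^{+}(x)>1$ then $\widehat{x}$ is of Type I or II (Types III, IV being of prime order), while if $x$ is non-maximal then $\widehat{x}$ is of Type I or IV (Types II, III being maximal); note Type IV is automatically non-maximal, as a maximal prime-order class is $M$, which is never of Type IV. To separate Type I from the ``$r\ge 2$'' alternatives I would use a symmetric argument: a Type II or Type IV class contains two incomparable cyclic subgroups, so by Lemma~\ref{Auto presetve suset}(1) its image under \emph{any} automorphism again contains two incomparable, hence distinct, cyclic subgroups, i.e. has $r\ge 2$. Applying this to $\pi^{-1}$ shows a Type I class cannot map to a Type II or Type IV class, and applying it to $\pi$ shows Type II (resp. IV) cannot map to Type I; hence Type I $\leftrightarrow$ Type I, Type II $\to$ Type II, and Type IV $\to$ Type IV. For Type I the order is then pinned by $\varphi(o(x^\pi))=|\widehat{x^\pi}|=|\widehat{x}|=\varphi(o(x))$ together with $deg^{+}(x^\pi)=deg^{+}(x)$; for Type II one first checks that the common order $m$ is a prime power (two distinct cyclic groups of equal order sharing all proper subgroups must be prime-power, else their maximal proper subgroups would generate them), and then the prime-power fact gives $o(x^\pi)=o(x)=m$.

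The hard part is the parameter in (4). Having established (2) and (3), I would fix a Type IV class $\widehat{x}$ with parameter $(p,q)$ and choose a common over-group: since $\langle x_{1}\rangle,\langle x_{2}\rangle$ are non-maximal with the same up-set, some $z$ satisfies $\langle x_{1}\rangle,\langle x_{2}\rangle\subset\langle z\rangle$, and by Observation~\ref{ob2}(1) the class $\widehat{z}$ is of Type I or II, so $o(z^\pi)=o(z)$. As $\pi$ preserves inclusion, $\langle x_{1}^\pi\rangle,\langle x_{2}^\pi\rangle$ sit inside the cyclic group $\langle z^\pi\rangle$ of the \emph{same} order as $\langle z\rangle$; combining this with $|\widehat{x^\pi}|=|\widehat{x}|=p+q-2$ (Observation~\ref{ob1}(3)) should force the image parameter to be $(p,q)$. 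I expect the genuine difficulty to lie exactly here: an automorphism may scramble the two $\sim$-classes of a Type IV class (as already happens for $\mathbb{Z}_{6}$), so $p+q-2=p'+q'-2$ alone does not determine $\{p,q\}$; one must exploit the rigidity of the common over-group — every cyclic subgroup properly containing $\langle x_{1}\rangle$ also contains $\langle x_{2}\rangle$ — to exclude a distinct pair of primes with the same sum.
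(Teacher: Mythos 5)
Your treatment of parts (1)--(3) is sound and in places tighter than the paper's own argument: where the paper deduces $o(x)=o(x^\pi)$ by appealing to an external structure theorem, you get it from the preserved quantities $\left|\widehat{x}\right|=\varphi(o(x))$ and $deg^{+}_{\overrightarrow{\mathcal{RP}}(G)}(x)=o(x)-\varphi(o(x))$ (for Type I) and from the prime-power shape of a Type II order; your observation that the maximal prime-order elements form a single $\pi$-invariant $\simeq$-class absorbs Type III together with the prime-order subcases of Types I and II in one stroke; and your incomparability argument (a class with $r\geq 2$ contains two incomparable cyclic subgroups, a property transported by $\pi$ and $\pi^{-1}$ via Lemma~\ref{Auto presetve suset}(1)) is a cleaner substitute for the paper's exclusion of Type IV images in part (3). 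These steps all check out.

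The genuine gap is exactly where you suspect it: the parameter in part (4). You correctly reduce to ruling out an image of parameter $(p',q')$ with $p'+q'=p+q$ but $\{p',q'\}\neq\{p,q\}$ (such pairs exist, e.g.\ $(3,13)$ and $(5,11)$), and you propose a common over-group $z$ with $o(z^\pi)=o(z)$; but for an arbitrary $z\in N^{-}_{\overrightarrow{\mathcal{RP}}(G)}(x_1)$ this is still inconclusive, since $o(z)$ could be divisible by both $pq$ and $p'q'$. The missing step is to take $z$ \emph{minimal}: because $N^{-}_{\overrightarrow{\mathcal{RP}}(G)}(x_1)=N^{-}_{\overrightarrow{\mathcal{RP}}(G)}(x_2)\neq\emptyset$, every cyclic subgroup properly containing $\left\langle x_1\right\rangle$ also contains $\left\langle x_2\right\rangle$, so a $\subseteq$-minimal $\left\langle z\right\rangle$ among these has order exactly $pq$ and satisfies $N^{+}_{\overrightarrow{\mathcal{RP}}(G)}(z)=\widehat{x}\cup\{e\}$. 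By Observation~\ref{ob2}(1) the class $\widehat{z}$ is of Type I or II, so your parts (2)--(3) give $o(z^\pi)=pq$; then $N^{+}_{\overrightarrow{\mathcal{RP}}(G)}(z^\pi)=\widehat{x^\pi}\cup\{e\}=\left\langle z^\pi\right\rangle\setminus[z^\pi]$ shows every element of $\widehat{x^\pi}$ has order $p$ or $q$, which pins the parameter. (For what it is worth, the paper's own proof of (4) is also terse at this point, inferring the parameter from $\left|\widehat{x^\pi}\right|=p+q-2$ alone; but the over-group construction it deploys in part (3) is precisely the device needed, so you should carry it out rather than leave it as an expectation.)
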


\begin{proof}
	Notice that if $deg_{\overrightarrow{\mathcal{RP}}(G)}^+(x)>1$, then $deg_{\overrightarrow{\mathcal{RP}}(G)}^+(x^\pi)>1$. So $o(x)$ and $o(x^\pi)$ are composite numbers. By~\cite[Theorem 3.1]{raj RPG and PG 2018}, $o(x)$ and $o(x^\pi)$ can be determined from the structure of $\overrightarrow{\mathcal{RP}}(G)$. By Lemma~\ref{Auto presetve suset}(1), if $\left\langle y\right\rangle \subset \left\langle x\right\rangle $ (resp. $\left\langle x\right\rangle \subset \left\langle y\right\rangle $), then $\left\langle y^\pi\right\rangle \subset \left\langle x^\pi\right\rangle $ (resp. $\left\langle x^\pi\right\rangle \subset \left\langle y^\pi\right\rangle $). It follows that $o(x)=o(x^\pi)$.

	(1) If $\widehat{x}$ is of Type~III, then $N^-_{{\overrightarrow{\mathcal{RP}}}(G)}(x)=\emptyset$ and  $N^+_{{\overrightarrow{\mathcal{RP}}}(G)}(x)=\{e\}$. It follows that $N^-_{{\overrightarrow{\mathcal{RP}}}(G)}(x^\pi)=\emptyset$ and  $N^+_{{\overrightarrow{\mathcal{RP}}}(G)}(x^\pi)=\{e^\pi\}=\{e\}$, by Lemma~\ref{image of e}(2). Thus, $\widehat{x^\pi}$ is of Type~III. Then by Observation~\ref{ob2}(2), $\widehat{x^\pi}=\widehat{x}$. 
	
	(2) Assume that $\widehat{x}$ is of Type~II. If $deg^+_{{\overrightarrow{\mathcal{RP}}(G)}}(x)>1$, then $o(x)=o(x^\pi)$ and $deg^-_{{\overrightarrow{\mathcal{RP}}(G)}}(x^\pi)=0$. So $\widehat{x^\pi}$ is either of Type~I or of Type~II. By Observation~\ref{ob1}(2), $\mid \widehat{x}\mid =\mid \widehat{x^\pi}\mid =m\cdot o(x)$, where $m\geq 2$. It follows that $\widehat{x^\pi}$ is of Type~II. If $deg^+_{{\overrightarrow{\mathcal{RP}}(G)}}(x)=1$, then $N^+_{{\overrightarrow{\mathcal{RP}}(G)}}(x)=N^+_{{\overrightarrow{\mathcal{RP}}(G)}}(x^\pi)=\{e\}$. Also, $deg^-_{{\overrightarrow{\mathcal{RP}}(G)}}(x)=0$ and so $deg^-_{{\overrightarrow{\mathcal{RP}}(G)}}(x^\pi)=0$. It follows that $\widehat{x^\pi}=\widehat{x}$. Thus, $\widehat{x^\pi}$ is of Type~II and $o(x)=o(x^\pi)$.	
	
	(3) Assume that $\widehat{x}$ is of Type~I. If $deg^+_{{\overrightarrow{\mathcal{RP}}(G)}}(x)>1$, then $o(x)=o(x^\pi)$. Applying~\eqref{action3}, we get $\mid \widehat{x^\pi}\mid =\varphi(o(x^\pi))$, so $\widehat{x^\pi}$ is of Type~I. If $deg^+_{{\overrightarrow{\mathcal{RP}}(G)}}(x)=1$, then $deg^+_{{\overrightarrow{\mathcal{RP}}(G)}}(x^\pi)=1$. It follows that $o(x)$ and $o(x^\pi)$ are primes and so $\widehat{x^\pi}$ is one of Type~I, Type~III or Type~IV. Let $o(x)=p$ and $o(x^\pi)=q$, where $p$, $q$ are primes. If $\widehat{x^\pi}$ is of Type~II, then by part (2), $\widehat{x^\pi}=\widehat{x}$. Thus $\widehat{x}$ is also of Type~II, which is a contradiction. If $\widehat{x^\pi}$ is of Type~III, then by part (1), $\widehat{x}$ is also of Type~III, which is a contradiction. If $\widehat{x^\pi}$ is of Type~IV with parameter $(q,s)$, where $(q\neq)$ $s$ is a prime. Then there exists $w\in G$ such that $o(w)=qs$ and $N^+_{{\overrightarrow{\mathcal{RP}}(G)}}(w)=\widehat{x^\pi}\cup \{e\}$. It follows that there exists $y\in G$ such that $y^\pi=w$ and $N^+_{{\overrightarrow{\mathcal{RP}}(G)}}(y)=\widehat{x}\cup \{e\}$, so $o(y)=p^2$. Since $deg^+_{{\overrightarrow{\mathcal{RP}}(G)}}(y)>1$, by the previous argument, we have $o(y)=o(w)$, so $p^2=qr$, which is a contradiction. Thus $\widehat{x^\pi}$ is of Type~I, so $\mid \widehat{x^\pi}\mid =q-1$. Applying~\eqref{action3}, we have $o(x)=o(x^\pi)$.
	
	(4) Let $\widehat{x}$ be of Type~IV with parameter $(p,q)$, where $p$ and $q$ are distinct primes. Then $\widehat{x}=p+q-2$ and $deg^+_{{\overrightarrow{\mathcal{RP}}(G)}}(x^\pi)\neq 0$, $deg^-_{{\overrightarrow{\mathcal{RP}}(G)}}(x^\pi)=1$. This implies that $o(x^\pi)$ is prime and so $\widehat{x^\pi}$ is either of Type~I or of Type~IV. Suppose $\widehat{x^\pi}$ is of Type~I, then by part (3), $\widehat{x^\pi}^{\pi^{-1}}=\widehat{x}$ is of Type~I, which is a contradiction. Thus $\widehat{x^\pi}$ is of Type~IV. By~\eqref{action4}, $\mid \widehat{x^\pi}\mid =p+q-2$, so $\widehat{x^\pi}$ is of Type~IV with parameter $(p,q)$.
\end{proof}

\section{Full structure of $Aut\left(\mathcal{RP}(G)\right)$ and $Aut(\overrightarrow{\mathcal{RP}}(G))$}
In this section, we determine $Aut(\overrightarrow{\mathcal{RP}}(G))$ and
$Aut\left(\mathcal{RP}(G)\right)$.
\begin{lemma}\label{main}
	Let $\pi$ be a permutation on $G$.
	\begin{enumerate}[\normalfont(1)]
		\item If $\pi\in \mathscr{M}(G)$, then $ \left\langle x \right\rangle^\pi  =\left\langle x^\pi \right\rangle$ for each $x\in G$.\label{123}

		\item Then $\pi\in {\prod_{i=1}^{t}S_{\widehat{u_i}}}$ if and only if $ \widehat{x}^\pi =\widehat{x}$ for each $x\in G$.
		
		\item Suppose for every $x,y\in G$, $\pi$ is such that 
		\begin{enumerate}
			\item $o(x)=o(x^\pi)$;
			\item if $\left\langle x\right\rangle $ is a maximal subgroup of $G$ and $x\neq x^\pi$, then the set of  all subgroups of $\left\langle x\right\rangle $ and the the set of  all subgroups of $\left\langle x^\pi\right\rangle $ are different;
			\item $\left\langle y\right\rangle \subset \left\langle x\right\rangle $ if and only if $\left\langle y^\pi\right\rangle  \subset \left\langle x^\pi\right\rangle$ for every $x, y\in G$,
		\end{enumerate} then $\pi\in \mathscr{M}(G)$. 
	\end{enumerate}
	
\end{lemma}

\begin{proof} 
Parts (1) and (3) follow from \eqref{cyclic group permutation} and
	part~(2) follows from \eqref{action2}.		
\end{proof}

\begin{lemma}\label{subgroup of Aut}
	
	\begin{enumerate}[\normalfont(1)]
		\item $\mathscr{M}(G)$ is a subgroup of $Aut( \overrightarrow{\mathcal{RP}}(G))$ and $Aut({\mathcal{RP}}(G))$;
		
		\item ${\prod_{i=1}^{t}S_{\widehat{u_i}}}$ is a subgroup of $Aut( \overrightarrow{\mathcal{RP}}(G))$ and $Aut({\mathcal{RP}}(G))$. 
	\end{enumerate}
\end{lemma}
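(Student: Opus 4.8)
The plan is to treat each part by first showing that the prescribed permutations actually lie in the automorphism group, and then (for part (2)) that this subgroup is stable under conjugation.

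For part (1), I would fix $\sigma\in P(G)$ and check that the permutation of $G$ it induces via \eqref{cyclic group permutation} is an automorphism of $\overrightarrow{\mathcal{RP}}(G)$. By Lemma~\ref{main}(1) we have $\langle x\rangle^{\sigma}=\langle x^{\sigma}\rangle$ for every $x\in G$, and by definition $\sigma$ preserves both the order and the inclusion/non-inclusion of cyclic subgroups. Hence $\langle y\rangle\subset\langle x\rangle$ holds iff $\langle y^{\sigma}\rangle\subset\langle x^{\sigma}\rangle$: inclusion is preserved in both directions, and equality cannot be created or destroyed since orders are preserved. Because an arc from $x$ to $y$ in $\overrightarrow{\mathcal{RP}}(G)$ is exactly the relation $\langle y\rangle\subset\langle x\rangle$, this shows $\sigma$ preserves the arc set, so $\sigma\in Aut(\overrightarrow{\mathcal{RP}}(G))$; passing to the underlying graph gives $\sigma\in Aut(\mathcal{RP}(G))$ as well. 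Since the action \eqref{cyclic group permutation} is faithful, $P(G)$ is identified with a subgroup of $S_{G}$, and the previous sentences place this subgroup inside both automorphism groups.

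For part (2), I first establish containment. Given $\tau\in\prod_{i=1}^{t}S_{\widehat{u_i}}$, the action \eqref{action2} keeps each class setwise fixed, so $x\simeq x^{\tau}$ for every $x$ (equivalently $\widehat{x}^{\tau}=\widehat{x}$, the easy direction of Lemma~\ref{main}(2)). Using that elements of a common $\simeq$-class are twins, so that $N^{-}_{\overrightarrow{\mathcal{RP}}(G)}(y)=N^{-}_{\overrightarrow{\mathcal{RP}}(G)}(y^{\tau})$ and $N^{+}_{\overrightarrow{\mathcal{RP}}(G)}(x)=N^{+}_{\overrightarrow{\mathcal{RP}}(G)}(x^{\tau})$, I would argue that $(x,y)$ is an arc iff $x\in N^{-}(y)$, iff $x\in N^{-}(y^{\tau})$, iff $y^{\tau}\in N^{+}(x)=N^{+}(x^{\tau})$, iff $(x^{\tau},y^{\tau})$ is an arc. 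Thus $\tau$ preserves the arc set, hence is an automorphism of $\overrightarrow{\mathcal{RP}}(G)$ and of $\mathcal{RP}(G)$; faithfulness of \eqref{action2} then makes $\prod_{i=1}^{t}S_{\widehat{u_i}}$ a subgroup of both.

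The \emph{substantive step} is normality, where I would invoke the induced action on $\mathscr{R}(G)$, namely $\widehat{x}^{\pi}=\widehat{x^{\pi}}$, valid for $\pi$ in $Aut(\overrightarrow{\mathcal{RP}}(G))$ and in $Aut(\mathcal{RP}(G))$. Fix such a $g$ and $\tau\in\prod_{i=1}^{t}S_{\widehat{u_i}}$, put $h=g^{-1}\tau g$, and let $z=x^{g^{-1}}$. Reading the class action as a right action, $\widehat{x}^{\,h}=\big((\widehat{x}^{\,g^{-1}})^{\tau}\big)^{g}=\big((\widehat{z})^{\tau}\big)^{g}=(\widehat{z})^{g}=\widehat{z^{g}}=\widehat{x}$, where the middle equality uses $\widehat{z}^{\,\tau}=\widehat{z}$ and the last uses $z^{g}=x$. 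Hence $h$ fixes every $\simeq$-class, so $h\in\prod_{i=1}^{t}S_{\widehat{u_i}}$ by Lemma~\ref{main}(2), and as $g$ was arbitrary this gives normality in both automorphism groups. I expect the only genuine obstacle to be bookkeeping: keeping the left/right conventions of the two induced actions consistent so that the conjugation telescopes correctly, since everything else reduces to a direct appeal to Lemma~\ref{main} and the twin property of the $\simeq$-classes.
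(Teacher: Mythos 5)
Your proposal is correct and follows essentially the same route as the paper: containment of $P(G)$ via Lemma~\ref{main}(1) and arc-preservation, containment of $\prod_{i=1}^{t}S_{\widehat{u_i}}$ via the twin property of $\simeq$-classes, and normality by checking that a conjugate fixes every $\simeq$-class and then invoking the characterization in Lemma~\ref{main}(2). Your version is if anything slightly more explicit than the paper's (spelling out both directions of arc-preservation and the neighborhood bookkeeping), but there is no substantive difference.
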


\begin{proof}	 
	(1) Let $\sigma\in \mathscr{M}(G)$. To show $\sigma\in Aut( \overrightarrow{\mathcal{RP}}(G))$, by  \eqref{cyclic group permutation}, it is enough to show that $(x,y)\in A({\overrightarrow{\mathcal{RP}}(G)})$ if and only if $(x^\sigma,y^\sigma)\in A({\overrightarrow{\mathcal{RP}}(G)})$.
	Suppose that $(x,y)\in A({\overrightarrow{\mathcal{RP}}(G)})$. Then $\left\langle y\right\rangle \subset 
	\left\langle x\right\rangle$. It follows from Lemma~\ref{main}(1) that $\left\langle y^\sigma \right\rangle \subset 
	\left\langle x^\sigma \right\rangle$. Therefore, $(x^\sigma,y^\sigma)\in A({\overrightarrow{\mathcal{RP}}(G)})$.
	Hence $P(G)$ is a subgroup of $Aut( \overrightarrow{\mathcal{RP}}(G))$. Since $Aut( \overrightarrow{\mathcal{RP}}(G))\subseteq Aut(\mathcal{RP}(G))$, it follows that $\mathscr{M}(G)$ is a subgroup of $Aut\left(\mathcal{RP}(G)\right)$.
	
	(2) Let $\tau\in {\prod_{i=1}^{t}S_{\widehat{u_i}}}$ and $(x,y)\in A(\overrightarrow{\mathcal{RP}}(G))$. Then $\left\langle y\right\rangle \subset \left\langle x \right\rangle$. By Lemma~\ref{main}(2), $y^\tau\in \widehat{y}$ and $x^\tau\in \widehat{x}$, so $\left\langle y^\tau\right\rangle \subset \left\langle x^\tau\right\rangle $. Hence $(x^\tau,y^\tau)\in A(\overrightarrow{\mathcal{RP}}(G))$. Conversely, assume that $(x^\tau,y^\tau)\in A(\overrightarrow{\mathcal{RP}}(G))$. Then $\left\langle y^\tau\right\rangle \subset \left\langle x^\tau\right\rangle $. This implies that $\left\langle y\right\rangle \subset \left\langle x\right\rangle $, since $x\in \widehat{x^\tau}$ and $y\in \widehat{y^\tau}$. Hence $(x,y)\in A(\overrightarrow{\mathcal{RP}}(G))$. Thus $\tau\in Aut( \overrightarrow{\mathcal{RP}}(G))$. It follows that ${\prod_{i=1}^{t}S_{\widehat{u_i}}}$ is a subgroup of $Aut( \overrightarrow{\mathcal{RP}}(G))$. Also, $Aut( \overrightarrow{\mathcal{RP}}(G))\subseteq Aut(\mathcal{RP}(G))$. This implies that ${\prod_{i=1}^{t}S_{\widehat{u_i}}}$ is a subgroup of $Aut(\mathcal{RP}(G))$.	
\end{proof}

\begin{lemma}\label{normal}
	$\mathscr{M}(G)$ is a subgroup of the normalizer of ${\prod_{i=1}^{t}S_{\widehat{u_i}}}$ in $Aut( \overrightarrow{\mathcal{RP}}(G))$ and $Aut(\mathcal{RP}(G))$.
\end{lemma}

\begin{proof}	
	Let $\pi\in \mathscr{M}(G)$, $\tau\in {\prod_{i=1}^{t}S_{\widehat{u_i}}}$ and $x\in G$. Then by Lemma~\ref{main}(2) and~\eqref{action4}, $\widehat{x}^{\pi\tau\pi^{-1}} =\widehat{x^\pi}^{\tau\pi^{-1}}=\widehat{x^\pi}^{\pi^{-1}}=\widehat{x}$. It follows that $\pi\tau\pi^{-1}\in {\prod_{i=1}^{t}S_{\widehat{u_i}}}$. Hence the proof.
\end{proof}

\begin{lemma}\label{intersection}
	$\mathscr{M}(G)\cap {\prod_{i=1}^{t}S_{\widehat{u_i}}}=\{ \textbf{1}_G \}$.
\end{lemma}

\begin{proof}
	Let $\pi \in \mathscr{M}(G)\cap {\prod_{i=1}^{t}S_{\widehat{u_i}}}$ and $x\in G$. Since $x=x_i^j$, where $\left\langle x_i\right\rangle \in M(G)$. Then $x^\pi=(x_i^{\pi})^j$. Since $\pi\in {\prod_{i=1}^{t}S_{\widehat{u_i}}}$ and $x_i^\pi\in \widehat{x_i}$, it follows that the set of all subgroups of $\left\langle x_i\right\rangle $ and the set of all subgroups of $\left\langle x_i^\pi\right\rangle $ are same; also $\pi\in \mathscr{M}(G)$, and so $\pi$ fixes $x_i$. Hence $x^\pi=x_i^j=x$. 
\end{proof}
The following result is an immediate consequence of Lemmas~\ref{subgroup of Aut},~\ref{normal} and~\ref{intersection}.

\begin{cor}\label{sub of aut}
	$\left( \prod_{i=1}^{t}S_{\widehat{u_i}} \right) \rtimes \mathscr{M}(G)$ is a subgroup of $Aut( \overrightarrow{\mathcal{RP}}(G))$ and $Aut\left(\mathcal{RP}(G)\right)$.
\end{cor}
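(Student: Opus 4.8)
The plan is to reduce the statement to the elementary group-theoretic fact that, inside any group, the setwise product of a subgroup with a normal subgroup is again a subgroup. Write $A$ for either $Aut(\overrightarrow{\mathcal{RP}}(G))$ or $Aut(\mathcal{RP}(G))$, and set $N={\prod_{i=1}^{t}S_{\widehat{u_i}}}$ and $H=P(G)$. By Lemma~\ref{subgroup of Aut}(1), $H\leq A$, and by Lemma~\ref{subgroup of Aut}(2), $N\trianglelefteq A$. Thus $NH$ is a well-defined subset of $A$, and it suffices to check that $NH$ is nonempty and closed under multiplication and inversion.

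First I would note that $NH$ contains the identity, hence is nonempty. For closure under products, take $n_1h_1,n_2h_2\in NH$ with $n_1,n_2\in N$ and $h_1,h_2\in H$. Then
\[
(n_1h_1)(n_2h_2)=n_1\,(h_1n_2h_1^{-1})\,(h_1h_2),
\]
and since $N$ is normal in $A$ we have $h_1n_2h_1^{-1}\in N$, while $h_1h_2\in H$; so the right-hand side lies in $NH$. For closure under inverses, take $nh\in NH$ with $n\in N$, $h\in H$; then
\[
(nh)^{-1}=h^{-1}n^{-1}=(h^{-1}n^{-1}h)\,h^{-1},
\]
and again normality of $N$ gives $h^{-1}n^{-1}h\in N$, so $(nh)^{-1}\in NH$. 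This shows $NH\leq A$, which is precisely the assertion.

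I do not expect any genuine obstacle here: as the paper indicates, the result is an immediate consequence of the preceding lemma, its only inputs being the subgroup and normality statements already established in Lemma~\ref{subgroup of Aut}. The single point worth recording explicitly is that the argument above applies verbatim to both $Aut(\overrightarrow{\mathcal{RP}}(G))$ and $Aut(\mathcal{RP}(G))$, since Lemma~\ref{subgroup of Aut} supplies the required subgroup and normality facts for each of them; hence taking $A$ to be each group in turn yields both halves of the corollary.
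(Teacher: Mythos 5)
Your proposal is correct and matches the paper's intent: the paper gives no written proof, calling the corollary an immediate consequence of Lemma~\ref{subgroup of Aut}, and the implicit argument is exactly the one you spell out, namely that the product of a subgroup with a normal subgroup is again a subgroup. Your explicit verification of closure under products and inverses is just the standard proof of that fact, so there is no divergence from the paper's route.
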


\begin{lemma}\label{ext crs}
	Let $\pi\in Aut( \overrightarrow{\mathcal{RP}}(G))$. Then there exists $\sigma\in \mathscr{M}(G)$ such that $N_{\overrightarrow{\mathcal{RP}}(G)}^-(x^\pi)=N_{\overrightarrow{\mathcal{RP}}(G)}^-(x^\sigma)$ and $N_{\overrightarrow{\mathcal{RP}}(G)}^+(x^\pi)=N_{\overrightarrow{\mathcal{RP}}(G)}^+(x^\sigma)$ for every $x\in G$.
\end{lemma}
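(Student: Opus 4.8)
The plan is to reformulate the conclusion and then build $\sigma$ from $\pi$ on the level of cyclic subgroups. Since $x\simeq y$ if and only if $N^-_{\overrightarrow{\mathcal{RP}}(G)}(x)=N^-_{\overrightarrow{\mathcal{RP}}(G)}(y)$ and $N^+_{\overrightarrow{\mathcal{RP}}(G)}(x)=N^+_{\overrightarrow{\mathcal{RP}}(G)}(y)$, the assertion is precisely that $\widehat{x^\sigma}=\widehat{x^\pi}$ for every $x\in G$. By \eqref{cyclic group permutation} an element of $P(G)$ is completely determined by an order- and inclusion-preserving permutation $\bar\sigma$ of $\mathscr{C}(G)$, and a generator $x$ of a cyclic subgroup $C$ satisfies $x^\sigma\in C^{\bar\sigma}$. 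As $\sim$ refines $\simeq$ and $\widehat{x}^\pi=\widehat{x^\pi}$, all generators of $C$ are carried by $\pi$ into one and the same $\simeq$-class $\widehat{x^\pi}$. Hence it suffices to construct $\bar\sigma$ so that, for each $C$, the subgroup $C^{\bar\sigma}$ has all of its generators in $\widehat{x^\pi}$; then $\widehat{x^\sigma}=\widehat{x^\pi}$.

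To define $\bar\sigma$ I would split by the type of the $\simeq$-class containing the generators of $C$, using Proposition~\ref{order equal of eql class}. For Type I and Type IV classes $\pi$ respects the finer relation $\sim$: it preserves the order of every element (established at the start of the proof of Proposition~\ref{order equal of eql class}), and a Type~IV class contains a single cyclic subgroup of order $p$ and a single one of order $q$, so no scattering of same-order generators can occur; there I set $C^{\bar\sigma}:=\langle x^\pi\rangle$ for any generator $x$ of $C$, which is well defined and lands in $\widehat{x^\pi}$. For Type II and Type III classes $\pi$ may send the generators of one cyclic subgroup to generators of several cyclic subgroups of the same order inside $\widehat{x^\pi}$; there I fix an arbitrary order-preserving bijection between the cyclic subgroups comprising $\widehat{x}$ and those comprising $\widehat{x^\pi}$. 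The trivial subgroup is fixed, since $\pi$ fixes $e$ by Lemma~\ref{image of e}. Because $\pi$ permutes the $\simeq$-classes bijectively while preserving their types and parameters, and $o(x)=o(x^\pi)$, these partial assignments assemble into an order-preserving bijection $\bar\sigma$ of $\mathscr{C}(G)$.

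The substance is to check that $\bar\sigma$ preserves inclusion. By Observation~\ref{ob2}(1) every strict inclusion among nontrivial cyclic subgroups runs from a Type I or II subgroup down to a Type I or IV one, so Type III subgroups (each a maximal cyclic subgroup of prime order) take part in no such inclusion and can be ignored. The structural fact I would use is that all cyclic subgroups inside a single Type~II class have exactly the same proper subgroups: $x_i\simeq x_j$ forces $N^+_{\overrightarrow{\mathcal{RP}}(G)}(x_i)=N^+_{\overrightarrow{\mathcal{RP}}(G)}(x_j)$, and this common set of out-neighbours is precisely the set of elements generating proper subgroups of $\langle x_i\rangle$. Now let $\left\langle y\right\rangle\subset\left\langle x\right\rangle$ with $x,y$ nontrivial; then $\widehat{y}$ is of Type I or IV, so $\langle y\rangle^{\bar\sigma}=\langle y^\pi\rangle$, and Lemma~\ref{Auto presetve suset}(1) gives $\langle y^\pi\rangle\subset\langle x^\pi\rangle$. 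If $\widehat{x}$ is of Type I this reads $\langle y\rangle^{\bar\sigma}\subset\langle x\rangle^{\bar\sigma}$ directly; if $\widehat{x}$ is of Type II, then $\langle x^\pi\rangle$ and the chosen image $\langle x\rangle^{\bar\sigma}$ lie in the common class $\widehat{x^\pi}$ and hence share all proper subgroups, so $\langle y\rangle^{\bar\sigma}\subset\langle x^\pi\rangle$ yields $\langle y\rangle^{\bar\sigma}\subset\langle x\rangle^{\bar\sigma}$. Thus $\bar\sigma$ preserves inclusion; since $(\mathscr{C}(G),\subseteq)$ is a finite poset, an inclusion-preserving bijection of it automatically reflects inclusion, whence $\bar\sigma\in P(G)$.

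For the corresponding $\sigma\in P(G)$ and any $x\in G$, the element $x^\sigma$ generates $C^{\bar\sigma}$, whose generators all lie in $\widehat{x^\pi}$; hence $\widehat{x^\sigma}=\widehat{x^\pi}$, and the characterisation of $\simeq$ in terms of neighbourhoods gives $N^-_{\overrightarrow{\mathcal{RP}}(G)}(x^\sigma)=N^-_{\overrightarrow{\mathcal{RP}}(G)}(x^\pi)$ and $N^+_{\overrightarrow{\mathcal{RP}}(G)}(x^\sigma)=N^+_{\overrightarrow{\mathcal{RP}}(G)}(x^\pi)$, as required. I expect the Type~II case to be the main obstacle: there $\pi$ need not carry the generators of a cyclic subgroup to the generators of a single cyclic subgroup, so the naive rule $\langle x\rangle\mapsto\langle x^\pi\rangle$ is ill defined, and the whole argument rests on the observation that the maximal cyclic subgroups within one Type~II class are indistinguishable from below.
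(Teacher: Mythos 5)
Your construction is the paper's own argument transposed from $G$ to $\mathscr{C}(G)$ --- reduce the claim to $\widehat{x^\sigma}=\widehat{x^\pi}$, then build the permutation type by type, with Type~II handled by an arbitrary matching of same-order subgroups inside a class --- but one step fails as written: the assertion that $\pi$ preserves the order of every element of a Type~IV class, which you use both to make the rule $C^{\bar\sigma}:=\langle x^\pi\rangle$ well defined on Type~IV subgroups and again in the inclusion check. The opening of the proof of Proposition~\ref{order equal of eql class} establishes $o(x)=o(x^\pi)$ only for elements with $deg^+_{\overrightarrow{\mathcal{RP}}(G)}(x)>1$, i.e.\ of composite order; the elements of a Type~IV class have prime order and out-neighbourhood $\{e\}$, and for them order preservation is simply false. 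Take $G=\mathbb Z_{15}$: the six elements of order $3$ or $5$ form a single Type~IV class, each with out-neighbourhood $\{e\}$ and in-neighbourhood the set of generators of $\mathbb Z_{15}$, so the transposition exchanging one element of order $3$ with one of order $5$ is an automorphism of $\overrightarrow{\mathcal{RP}}(\mathbb Z_{15})$. For such a $\pi$ and $C$ the subgroup of order $3$, the two generators of $C$ are sent to generators of \emph{different} subgroups, so $\langle x^\pi\rangle$ depends on the chosen generator and your $\bar\sigma$ is not defined there.

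The gap is local and repairable with tools you already deploy for Type~II. For a Type~IV subgroup $C$, define $C^{\bar\sigma}$ to be the unique cyclic subgroup of order $|C|$ among the two subgroups whose generators make up $\widehat{x^\pi}$ (this is exactly what the paper's Case~4 does: it matches orders rather than applying $\pi$). All generators of this subgroup still lie in $\widehat{x^\pi}$, so the key identity $\widehat{x^\sigma}=\widehat{x^\pi}$ survives, and $\bar\sigma$ is order preserving by fiat. In the inclusion check, when $\widehat{y}$ is of Type~IV you can no longer write $\langle y\rangle^{\bar\sigma}=\langle y^\pi\rangle$; instead note that any generator $z$ of $\langle y\rangle^{\bar\sigma}$ satisfies $z\simeq y^\pi$, hence $N^-_{\overrightarrow{\mathcal{RP}}(G)}(z)=N^-_{\overrightarrow{\mathcal{RP}}(G)}(y^\pi)\ni x^\pi$ by Lemma~\ref{Auto presetve suset}(1), which gives $\langle y\rangle^{\bar\sigma}\subset\langle x^\pi\rangle$; the remainder of your argument (direct for Type~I $\widehat{x}$, via shared proper subgroups for Type~II $\widehat{x}$) then goes through unchanged. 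With that one repair your proof is correct and essentially coincides with the paper's.
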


\begin{proof}
	Let $\pi\in Aut( \overrightarrow{\mathcal{RP}}(G))$. We define a map $\sigma$ on $G$ as follows: 	
	$e^\sigma:=e$. Let $x\in G$ with $x\neq e$. 	
	We have the following cases:
	
	\textit{Case} 1. If $\widehat{x}$ is of Type~I, then $w^\sigma:=w^\pi$ for every $w\in \widehat{x}$. Then $o(w)=o(w^\pi)$. If $\left\langle x\right\rangle $ is a maximal subgroup of $G$, then there is no maximal cyclic subgroup of $G$ such that the set of all its subgroups is the same as the set of all subgroups of $\left\langle w\right\rangle $, since  $\widehat{x}$ is of Type~I. Also, $\widehat{x}^\sigma =\widehat{x}^\pi=\widehat{x^\pi}$.
	
	\textit{Case} 2. If $\widehat{x}$ is of Type~II, and $x^\pi\notin \widehat{x}$, then by Proposition~\ref{order equal of eql class}(2), $\widehat{x^\pi}$ is also of Type~II with $o(x)=o(x^\pi)$. It follows that if $\widehat{x}=\displaystyle\bigcup_{i=1}^{r}[x_i]$, where $x_i\in G$, then $\widehat{x^\pi}=\displaystyle\bigcup_{i=1}^{r}[y_i]$, where $y_i=x_j^\pi$ for some $j\in\{1,2,\ldots,r\}$ and $o(x_i)=o(y_j)=o(x)$ for $i,j=1,2,\ldots,r$. Then for every $w\in \widehat{x}$, we have $w=[x_i]_l$ for some $i$ and $l$. For such a $w$, we define $w^\sigma:= [x_i]_l^\sigma=[x_i^\pi]_l$. It follows that $ \widehat{x}^\sigma=\widehat{x^\pi}$ for every $w\in \widehat{x}$.  Since $\widehat{x}\neq \widehat{x^\sigma}$, the set of  all subgroups of $\left\langle w\right\rangle $ and the  set of  all subgroups of $\left\langle w^\pi\right\rangle $ are not the same for every $w\in \widehat{x}$.

	If $\widehat{x}$ is of Type~II, and $x^\pi\in \widehat{x}$, then we define $w^\sigma:=w$ for every $w\in \widehat{x}$. It follows that $\widehat{x}^\sigma =\widehat{x}=\widehat{x}^\pi=\widehat{x^\pi}$.
	
	\textit{Case} 3. If $\widehat{x}$ is of Type~III, then $w^\sigma:=w$ for every $w\in \widehat{x}$.  It follows that $o(w)=o(w^\pi)$ for every $w\in \widehat{x}$ and $ \widehat{x}^\sigma=\widehat{x}$. So by Proposition~\ref{order equal of eql class}(1), $ \widehat{x}^\sigma=\widehat{x}=\widehat{x^\pi}$.
	
	\textit{Case} 4.	If $\widehat{x}$ is of Type~IV with parameter $(p,q)$, then by Proposition~\ref{order equal of eql class}(4), $\widehat{x^\pi}$ is also of Type~IV with parameter $(p,q)$. Then for each $w\in \widehat{x}$, $w^\sigma:=y$, where $y\in \widehat{x^\pi}$ such that $o(y)=o(w)$. So, $ \widehat{x}^\sigma=\widehat{x^\pi}$. Since $\widehat{x}$ is of Type~IV, $\left\langle w\right\rangle $ is not a maximal cyclic subgroup.
	
	Summarizing the above argument, we get  $\widehat{x^\sigma}= \widehat{x}^\sigma =\widehat{x^\pi}$ for every $\widehat{x}\in \mathscr{R}(G)$.
	
	% If $x\neq e$ and $\widehat{[x]}=\widehat{[e]}$, then define $f(x)=x$. Now we show that $h\in \mathscr{A}(G)$. 
	
	Since $\simeq$ is an equivalence relation on $G$ and $\pi$ is well defined, it follows that $\sigma$ is well defined. Let $A_1, A_{2}, A_3$ and $A_4$ be the set of all elements in the $\simeq$-classes of Type~I, II, III, IV, respectively. Then $G$ is the disjoint union of $\{e\}, A_1, A_2, A_3, A_4$. By the definition of $\sigma$, it is clear that $o(x)=o(x^\sigma)$ and the  map $\sigma\mid_{A_i}$ is a bijection on $A_i$ for $i=1,2,3,4$. Thus $\sigma$ is a bijection on $G$. 
	
	Now let $x,y\in G$. Then by Lemma~\ref{Auto presetve suset}(2), $\left\langle y\right\rangle \subset \left\langle x\right\rangle $ if and only if $\left\langle y^\pi\right\rangle \subset \left\langle x^\pi\right\rangle $. It follows that $\left\langle y\right\rangle \subset \left\langle x\right\rangle $ if and only if $\left\langle y^\sigma\right\rangle \subset \left\langle x^\sigma\right\rangle $, since $\widehat{x^\sigma}=\widehat{x^\pi}$. So by Lemma~\ref{main}(3), $\sigma\in P(G)$ and $N_{\overrightarrow{\mathcal{RP}}(G)}^-(x^\pi)=N_{\overrightarrow{\mathcal{RP}}(G)}^-(x^\sigma)$, $N_{\overrightarrow{\mathcal{RP}}(G)}^+(x^\pi)=N_{\overrightarrow{\mathcal{RP}}(G)}^+(x^\sigma)$ for every $x\in G$. 
\end{proof}

%	\begin{thm}\label{auto}
	%		Let $G$ be a finite group. Then $Aut( \overrightarrow{\mathcal{RP}}(G))= ( {\prod_{i=1}^{t}S_{\widehat{u_i}}}) P(G)$.
	%	\end{thm}
%	
\begin{thm}\label{auto}
	Let $G$ be a finite group. Then
	
	$$Aut( \overrightarrow{\mathcal{RP}}(G))= \left(  {\prod_{i=1}^{t}S_{\widehat{u_i}}}\right)  \rtimes \mathscr{M}(G),$$
	
	\noindent where  $\mathscr{M}(G)$ and ${\prod_{i=1}^{t}S_{\widehat{u_i}}}$ act on $G$ as in~\eqref{cyclic group permutation} and~\eqref{action2}, respectively. 
\end{thm}

\begin{proof}
	Let $\pi\in Aut( \overrightarrow{\mathcal{RP}}(G))$. Then by Lemma~\ref{ext crs}, there exists $\sigma\in \mathscr{M}(G)$ such that $N_{\overrightarrow{\mathcal{RP}}(G)}^-(x^\pi)=N_{\overrightarrow{\mathcal{RP}}(G)}^-(x^\sigma)$ and $N_{\overrightarrow{\mathcal{RP}}(G)}^+(x^\pi)=N_{\overrightarrow{\mathcal{RP}}(G)}^+(x^\sigma)$ for every $x\in G$. Thus $$N_{\overrightarrow{\mathcal{RP}}(G)}^-(x^{\pi\sigma^{-1}}) =N_{\overrightarrow{\mathcal{RP}}(G)}^-((x^{\sigma^{-1}})^\pi)=N_{\overrightarrow{\mathcal{RP}}(G)}^-((x^{\sigma^{-1}})^\sigma)= N_{\overrightarrow{\mathcal{RP}}(G)}^-(x)$$ and similarly, $N_{\overrightarrow{\mathcal{RP}}(G)}^+(x^{\pi\sigma^{-1}})=N_{\overrightarrow{\mathcal{RP}}(G)}^+(x)$. Therefore, $\widehat{x}^{\pi\sigma^{-1}}=\widehat{x}$. So by Lemma~\ref{main}(2), $\pi\sigma^{-1}\in {\prod_{i=1}^{t}S_{\widehat{u_i}}}$. Hence $\pi\in \left(  {\prod_{i=1}^{t}S_{\widehat{u_i}}}\right)  \mathscr{M}(G)$ and so the result follows from Corollary~\ref{sub of aut}. 
\end{proof}

\begin{thm}\label{autRP}
	\begin{enumerate}[\normalfont(1)]
		\item Let $G$ be a finite group. If $G\ncong \mathbb Z_{2^m}(m\geq 1)$ and $Q_{2^\alpha}(\alpha\geq 3)$, then $$Aut\left(\mathcal{RP}(G)\right)=\left( {\prod_{i=1}^{t}S_{\widehat{u_i}}}\right)  \rtimes \mathscr{M}(G),$$ 
		\noindent where  $\mathscr{M}(G)$ and ${\prod_{i=1}^{t}S_{\widehat{u_i}}}$ act on $G$ as in~\eqref{cyclic group permutation} and~\eqref{action2}, respectively.
		
		\item $ Aut(\mathcal{RP}(\mathbb Z_{2^m}))=\mathbb Z_2\times {\displaystyle \prod_{i=2}^{m}}S_{\varphi(2^i)} $ for $m\geq 2$.
		
		\item $ Aut(\mathcal{RP}(Q_{2^\alpha}))=\mathbb Z_2\times \left( {\displaystyle\prod_{i=2}^{\alpha-1}}S_{\varphi(2^i)}\right) \times S_{2^{\alpha-1}}$ for $\alpha\geq 3$.
	\end{enumerate}
\end{thm}
\begin{proof}
	(1)  Let $\pi\in Aut\left(\mathcal{RP}(G)\right)$. Let $x, y\in G$. Suppose $x$ and $y$ are non-trivial elements in $G$. Then by Lemma~\ref{Auto presetve suset}(2), $\left\langle y\right\rangle \subset \left\langle x\right\rangle $ if and only if $\left\langle y^\pi\right\rangle \subset \left\langle x^\pi\right\rangle $. Suppose either $x$ or $y$ is the trivial element. Without loss of generality, we assume that $y=e$. Then by Lemma~~\ref{image of e}(1), $e^\pi=e$ and so $\left\langle y\right\rangle \subset \left\langle x\right\rangle $ and $\left\langle y^\pi\right\rangle \subset \left\langle x^\pi\right\rangle $.  It follows that $(x,y)\in {\overrightarrow{\mathcal{RP}}(G)}$ if and only if $(x^\pi,y^\pi)\in {\overrightarrow{\mathcal{RP}}(G)}$. Thus $\pi\in Aut( \overrightarrow{\mathcal{RP}}(G))$ and so $Aut(\mathcal{RP}(G))\subseteq Aut( \overrightarrow{\mathcal{RP}}(G))$. Thus $Aut(\mathcal{RP}(G))= Aut( \overrightarrow{\mathcal{RP}}(G))$, since $Aut( \overrightarrow{\mathcal{RP}}(G))\subseteq  Aut(\mathcal{RP}(G))$. 	
	So the proof follows from Theorem~\ref{auto}.
	
	(2)-(3) Let $\pi\in Aut\left(\mathcal{RP}(G)\right)$. By Lemma~~\ref{image of e}(1), $e^\pi=e$ or $a$, where $a$ is the unique element of order $2$ in $G$. Let $x,y\in G$ be such that $x,y\neq e,a$. Then $deg_{\mathcal{RP}(G)}(x)=deg_{\mathcal{RP}(G)}(y)$ if and only if $\widehat{x}=\widehat{y}$.
	%		 Thus  $\pi\mid_{\widehat{x}}\left( \widehat{x}\right) =\widehat{x}$. 
	It follows that $\widehat{x}^\pi=\widehat{x}$ for every $x\neq e,a$ and so $\pi\mid_{\{G\setminus\{e,a\}\}}\in{\prod_{i=1}^{t}S_{\widehat{u_i}}}$. So we get \begin{align}\label{thm2eqn} Aut\left(\mathcal{RP}(G)\right)=\mathbb Z_2\times {\prod_{i=1}^{t}S_{\widehat{u_i}}}.
	\end{align}

	It is not hard to see that  $\mathscr{R}(\mathbb Z_{2^m})=\left\lbrace \widehat{u_1}, \widehat{u_2}, \ldots , \widehat{u_t}\right\rbrace $, where $t=m+1$ and $\widehat{u_i}$ is the set of all generators of the unique cyclic subgroup of order $2^{i-1}$ in $\mathbb Z_{2^m}$ for $i=1,2,\ldots ,t$. Let $a$ be an element of order $2^{\alpha -1}$ in $Q_{2^\alpha}$. Then $\mathscr{R}(Q_{2^\alpha})=\left\lbrace \widehat{u_1}, \widehat{u_2}, \ldots ,  \widehat{u_{t}}\right\rbrace $, where $t=\alpha+1$, $\widehat{u_i}$ is the set of all generators of the unique cyclic subgroup of order $2^{i-1}$ in $\langle a \rangle$ for  $i=1,2,\ldots t-1$ and $\widehat{u_{t}}$ is the set of all elements in $Q_{2^\alpha}\setminus\langle a \rangle$. Substituting these in~\eqref{thm2eqn}, we get the results.
\end{proof}

\section{Examples}
In this section, we compute the automorphism groups of the reduced power (di)graphs of several classes of finite groups. 
\begin{example}\label{Aut(Zn)}
	Let $n\geq 1$ be an integer and let $p$, $q$ be distinct primes. Then
	\begin{enumerate}[\normalfont(1)]
		\item $Aut(\overrightarrow{\mathcal{RP}}(\mathbb Z_n))) \cong \begin{cases}
			{\prod_{d\mid n}}S_{\varphi(d)}, & \text{if}~ n\neq pq;\\
			S_{\varphi(pq)}\times S_{p+q-2}, & \text{if}~n=pq.
		\end{cases}$
		\item $Aut(\mathcal{RP}(\mathbb Z_n))\cong \begin{cases}
			{\prod_{d\mid n}}S_{\varphi(d)}, & ~\text{if}~ n\neq pq, 2^\alpha;\\
			S_{\varphi(pq)}\times S_{p+q-2}, & ~\text{if}~n=pq;\\
			S_2\times  \prod_{i=2}^{\alpha}S_{\varphi(2^i)}, &~\text{if}~n=2^\alpha,
		\end{cases}$\\
		where $\alpha\geq 1$.
	\end{enumerate} 
\end{example}

\begin{proof}
	Notice that $\mathscr{M}(\mathbb Z_n)$ is a singleton, since $\mathbb Z_n$ has exactly one maximal cyclic subgroup. For each $d\in D(n)$,  let $A_d$ denote the unique cyclic subgroup of order $d$ in  $\mathbb Z_n$, where $D(n)$ denote the set of all positive divisors of $n$. Then $S_{[A_d]}\cong S_{\varphi(d)}$.
	Notice that \begin{align}\label{R(Zn)}
		\mathscr{R}(\mathbb Z_n) =\begin{cases}
			\{ [A_d]\mid d\in D(n)\} & \text{if~} n\neq pq;\\
			\{ [A_d]\mid d=1,n\}\cup \{[A_p]\cup [A_q]\}& \text{if~} n= pq;
			%		\\
			%		**** &\text{if~} n= 2^\alpha,\text{where~}\alpha \geq 1;
		\end{cases}
	\end{align}		 
	Applying these in Theorems~\ref{auto} and \ref{autRP}, we get the result.
\end{proof}

\begin{example}
	Let $n\geq 2$ be an integer and let $p$, $q$ be distinct primes. Then
	\[ Aut( \overrightarrow{\mathcal{RP}}(D_{2n})) =Aut({\mathcal{RP}}(D_{2n}))\cong \begin{cases}
		\left({ \prod_{d\mid n}}S_{\phi(d)}\right)\times S_n, &~\text{if}~n\neq pq;\\
		S_{\varphi(pq)}\times S_{p+q-2}\times S_{pq}, &~\text{if}~n= pq. 
	\end{cases}\]	
\end{example}

\begin{proof}
Notice that	$D_{2n}=\left\langle a,b \mid a^n=e=b^2,\right.$ $\left. ab=ba^{-1} \right\rangle=\{e,a,a^2,\ldots,a^n,b,ab,\ldots ,a^{n-1}b\}$. Here $M(D_{2n})=\{\left\langle a\right\rangle \}\cup \{\langle a^ib\rangle \mid 0\leq i\leq n-1\}$. Also, $o(a^ib)=2, \mid\langle a^ib\rangle \cap \langle a\rangle\mid=1$ and $\mid\langle a^ib\rangle \cap \langle a^jb\rangle\mid=1$ for $i\neq j$. Hence $\mathscr{M}(D_{2n})=\{\textbf{1}_{M(D_{2n})}\}$.
	Also, $\mathscr{R}(D_{2n})=\mathscr{R}(\left\langle a\right\rangle)\cup  \widehat{b}$, where $\widehat{b}=\{a^ib\mid 1\leq i\leq n\}$ (cf. \cite[Figure 2]{raj RPG 2018}). 		
	Applying these in Theorems~\ref{auto} and~\ref{autRP}, we get
	\begin{align*}
		Aut( \overrightarrow{\mathcal{RP}}(D_{2n}))=Aut( \mathcal{RP}(D_{2n}))&= \left( {\prod_{\widehat{u}\in \mathscr{R}(\left\langle a\right\rangle) }S_{\widehat{u}}}\right) \times S_{\widehat{b}} \\
		&= \left( {\prod_{\widehat{u}\in \mathscr{R}(\mathbb Z_n) }S_{\widehat{u}}}\right) \times S_{\widehat{b}}.
	\end{align*}
	So the result follows from~\eqref{R(Zn)}.
\end{proof}

\begin{example}\label{AutQm}
	Let $m\geq 2$ be an integer and let $p$ be an odd prime. Then
	\begin{enumerate}[\normalfont(1)]
		\item $Aut( \overrightarrow{\mathcal{RP}}(Q_{4m})) \cong \begin{cases}
			\left(	{\prod_{d\mid 2m}}S_{\varphi(d)}\right)\times S_{2m}, & ~\text{if}~ m\neq p;\\
			S_{p-1}\times S_{p}\times S_{2p}, & ~\text{if}~m=p.
		\end{cases}$
		\item $Aut(\mathcal{RP}(Q_{4m}))\cong \begin{cases}
			\left({\prod_{d\mid 2m}}S_{\varphi(d)}\right)\times S_{2m}, & ~\text{if}~ m\neq p, 2^\alpha;\\
			S_{p-1}\times S_{p}\times S_{2p}, & ~\text{if}~m=p;\\
			S_2\times \left(\prod_{i=2}^{\alpha+1}S_{\varphi(2^i)}\right)\times S_{2^{\alpha+1}}, &~\text{if}~m=2^\alpha,
		\end{cases}$\\
		where $\alpha\geq 1$.
	\end{enumerate} 
\end{example}

\begin{proof}
Notice that	$Q_{4m}=\left\langle a, b\mid a^{2m}=e=b^4, bab^{-1}=a^{-1} \right\rangle=\{e,a,a^2,\ldots,a^{2m-1},b,ab,a^2b,\\\ldots,a^{2m-1}b\}$. Here $M(Q_{4m})=\{ \left\langle a\right\rangle \}\cup\langle a^ib \rangle  \mid 0\leq i\leq m-1 \rbrace  $. Clearly, $o(a)=2m$. It can be seen that for every $i=0,1,\ldots m-1$, $o(a^ib)=4$ and  $\left\langle a^ib\right\rangle $  has the subgroup of order $2$ as common. Hence $\mathscr{M}(Q_{4m})=\{\textbf{1}_{M(Q_{4m})}\}$.
	Also $\mathscr{R}(Q_{4m})=\mathscr{R}(\left\langle a\right\rangle)\cup  \widehat{b}$, where $\widehat{b}=\{a^ib\mid 1\leq i\leq 2m\}$ (cf. \cite[Figure 3]{raj RPG 2018}). 		
	Hence by Theorem~\ref{auto},
	\begin{align*}
		Aut( \overrightarrow{\mathcal{RP}}(Q_{4m}))&=  {\left(\prod_{\widehat{u}\in \mathscr{R}(\left\langle a\right\rangle) }S_{\widehat{u}}\right)}  \times S_{\widehat{b}} \\
		&=\left({\prod_{\widehat{u}\in \mathscr{R}(\mathbb Z_{2m}) }S_{\widehat{u}}}\right) \times S_{\widehat{b}}.
	\end{align*}
	So the part~(1) of this result follows from~\eqref{R(Zn)}. 
	
	In view of Theorems~\ref{auto} and~\ref{autRP}, it follows that $Aut( \overrightarrow{\mathcal{RP}}(Q_{4m}))=Aut( \mathcal{RP}(Q_{4m}))$ for $m\neq 2^\alpha$, where $\alpha \geq 2$. So the part~(2) of this result follows from Theorem~\ref{autRP}(3). 
\end{proof}

\begin{example}
	Let $n\geq 2$ be an integer. Then
	$$Aut( \overrightarrow{\mathcal{RP}}(SD_{8n})) =Aut({\mathcal{RP}}(SD_{8n}))\cong  S_{2n}\times S_{2n}\times
	{\prod_{d\mid 4n}}S_{\phi(d)} .$$	 
\end{example}

\begin{proof}
	$SD_{8n}=\left\langle a, b\mid a^{4n}=e =b^2, bab^{-1}=\right.$ $\left.a^{2n-1} \right\rangle =\{e,a,a^2\ldots, a^{4n-1}\}\cup \{a^ib\mid 1\leq i\leq 4n~\text{and $i$ is even}\} \cup \{a^ib\mid 1\leq i\leq 4n~\text{and $i$ is odd}\}$. Here $M(SD_{8n})=\{\langle a \rangle \}\cup \{\langle a^ib\rangle \mid 1\leq i\leq 4n~\text{and $i$ is even}\} \cup \{\langle a^ib\rangle \mid 1\leq i\leq 2n~\text{and $i$ is odd}\}$. Notice that $o(a^ib)=2$, where $1\leq i\leq 4n$ and $i$ is even; $o(a^ib)=4$, where $1\leq i\leq 2n$ and $i$ is odd;  $\langle a^ib\rangle \cap \langle a\rangle=\langle a^{2n}\rangle $, where $1\leq i\leq 2n$ and $i$ is odd  and $\langle a^ib\rangle \cap \langle a^jb\rangle=\langle a^{2n}\rangle $,  where $1\leq i,j\leq 2n$, $i\neq j$ and $i$ is odd. Hence $\mathscr{M}(SD_{8n})=\{\textbf{1}_{M(SD_{8n})}\}$.
	Also, $\mathscr{R}(SD_{8n})=\mathscr{R}(\left\langle a\right\rangle)\cup  \widehat{b}\cup \widehat{ab}$, where $\widehat{b}=\{a^ib\mid 1\leq i\leq 4n~\text{and $i$ is even}\}$ and $\widehat{ab}=\{a^ib\mid 1\leq i\leq 4n~\text{and $i$ is odd}\}$ (cf. \cite[Figure 4]{raj RPG 2018}).
	Applying these in Theorems~\ref{auto} and~\ref{autRP},
	\begin{align*}
		Aut( \overrightarrow{\mathcal{RP}}(SD_{8n}))=Aut( \mathcal{RP}(SD_{8n}))&= \left( {\prod_{\widehat{u}\in \mathscr{R}(\left\langle a\right\rangle) }S_{\widehat{u}}}\right) \times S_{\widehat{b}}\times S_{\widehat{ab}} \\
		&= \left({\prod_{\widehat{u}\in \mathscr{R}(\mathbb Z_{4n}) }S_{\widehat{u}}}\right) \times S_{\widehat{b}}\times S_{\widehat{ab}}.
	\end{align*}
	So the result follows from~\eqref{R(Zn)}.
\end{proof}

\begin{example}		
	Let $G$ be a finite group of order $n$ which is isomorphic to a $p$-group with exponent $p$ (except $\mathbb Z_2$) or a non-nilpotent group of order $n$$(=p^mq)$ with all non-trivial elements are of order $p$ or $q$, where $p$, $q$ are distinct primes. Then $Aut( \overrightarrow{\mathcal{RP}}(G)) =Aut( {\mathcal{RP}}(G)) \cong S_{n-1}$.		
\end{example}

\begin{proof}
	Let $G$ be any one of the groups mentioned in this example. According to~\cite{prime elt}, every non-trivial element of $G$ is of order either $p$ or $q$. Hence all the proper subgroups of $G$ are maximal cyclic subgroups of prime order and the set of all proper subgroups of each of these maximal subgroups are the same, which is clearly $\{e\}$.  Hence $\mathscr{M}(G)=\{\textbf{1}_{M(G)}\}$. Also, $\mathscr{R}(G)=\{ \{e\}, G\setminus \{e\}\}$. Applying these in Theorems~\ref{auto} and~\ref{autRP}, we get
	
	$$Aut(\overrightarrow{\mathcal{RP}}(G))=Aut(\mathcal{RP}(G))=  S_{G\setminus \{e\}}
	=S_{n-1}.$$
	
	\noindent	 	So the result follows.	
	%		 By~\cite[Corollary 2.11]{raj RPG 2017},  $\mathcal{RP}(G)$ is the star graph $K_{1,|G|-1}$ and so 
	%		 $\overrightarrow{\mathcal{RP}}(G)$ is the directed graph obtained from $\mathcal{RP}(G)$ by  imposing
	%		 an orientation on its edges (all edges directed away from its pendent vertex).
	%		 It is easy to see that $Aut( \overrightarrow{\mathcal{RP}}(G)) =Aut(\left({\mathcal{RP}}(G)\right) \cong S_{|G|-1}$.	
\end{proof}

\begin{example}
	Let $n\geq 1$ be an integer and $p$ be a prime. Then
	
	 $Aut( \overrightarrow{\mathcal{RP}}(\mathbb Z_{p^2}^n))=Aut\left(\mathcal{RP}(\mathbb Z_{p^2}^n)\right)\cong \left(S_{p-1}\times S_{p^n(p-1)}\right)\wr S_m$, 	 
	 where $m=\frac{p^n-1}{p-1}$.
\end{example}

\begin{proof}
	Notice that $\mathbb Z_{p^2}^n$ has $m=\frac{p^n-1}{p-1}$ subgroups of order $p$. Let them be $\left\langle x_i\right\rangle $ for $i=1,2,\ldots, m$. Also, each $\left\langle x_i\right\rangle $ is contained in exactly $p^{n-1}$ cyclic subgroups of order $p^2$. So $M(G)$ is the set of all cyclic subgroups of order $p^2$. It follows that $\mathscr{M}(\mathbb Z_{p^2}^n)\cong S_{m}$. Also, $\mathscr{R}(\mathbb Z_{p^2}^n)=\{e,\widehat{x_1},\widehat{x_2},\ldots , \widehat{x_m},\widehat{b_1},\widehat{b_2},\ldots, \widehat{b_{m}}\}$, where $\left\langle b_i\right\rangle $ is the cyclic subgroup of order $p^2$ containing $\left\langle x\right\rangle $. Hence $\mid \widehat{x_i}\mid =p-1$ and $\mid \widehat{b_i}\mid=p^n(p-1)$. Applying these in Theorems~\ref{auto} and~\ref{autRP}, we get
	\begin{align*}
		Aut( \overrightarrow{\mathcal{RP}}(\mathbb Z_{p^2}^n))=Aut( \mathcal{RP}(\mathbb Z_{p^2}^n))&=  \left[{\prod_{i=1}^{m}}\left(S_{p-1} \times S_{p^n(p-1)}\right)  \right] \ltimes S_m\\
		&= \left(S_{p-1} \times S_{p^n(p-1)}\right)\wr S_m.
	\end{align*}
This completes the proof.
\end{proof}

\begin{example}\label{modular}
	Let $\alpha\geq 3$ be an integer and $p$ be a prime. Then we have the following.
	\begin{enumerate}[\normalfont(1)]
		\item $Aut( \overrightarrow{\mathcal{RP}}(\mathbb M_{8}))=Aut( \mathcal{RP}(\mathbb M_{8}))=S_2\times S_4$.
		
		\item $Aut( \overrightarrow{\mathcal{RP}}(\mathbb M_{p^\alpha}))=Aut( \mathcal{RP}(\mathbb M_{p^\alpha}))=S_{p^{\alpha-1}(p-1)}\times S_{p(p-1)}\times G_1  \times G_2$,
	\end{enumerate}
	where $p^\alpha \neq 8$ and $G_j=\displaystyle{\prod_{i=1}^{\alpha-j-1}}S_{p^{\alpha-i-2}(p-1)^{j}}$for $j=1,2$.
\end{example}

\begin{proof}
	(1) From the subgroup lattice of $\mathbb M_8$ (cf.~\cite[Figure 3]{bohanan}), it is clear that $\mathbb M_8$ contains four maximal cyclic subgroups of order 2 and a unique maximal cyclic subgroups of order 4. It follows that $\mathscr{M}(\mathbb M_8)=I_{M(\mathbb M_8)}$. Also, $\mathscr{R}(\mathbb M_8)=\mathscr{R}(\mathbb Z_4)\cup \{\widehat{b}\}$, where $\widehat{b}=\{b,a^2b,ab,a^3b\}$. Applying these in Theorems~\ref{auto} and~\ref{autRP}, we get the result~(1).
	
	(2) From the subgroup lattice of $\mathbb M_{p^\alpha}$, where $p^\alpha \neq 8$ (cf.~\cite[Figure 4]{bohanan}), it is clear that $\mathbb M_{p^\alpha}$ has $p$ number of cyclic subgroups of order $p^i$ for $i=2,3,\ldots, p^n$ and $p+1$ cyclic subgroups of order $p$. Hence $M(G)$ is the set of all cyclic subgroups of order $p^n$.   Moreover, all the maximal cyclic subgroups have the same set of proper cyclic subgroups. So $\mathscr{M}(G)=\{\textbf{1}_{M(G)}\}$. Also, 	
	$$\mathscr R(\mathbb M_{p^\alpha})=\{\widehat{a}, \widehat{b}, e\}\displaystyle\bigcup_{i=1}^{\alpha-2}\left\lbrace \widehat{a^{p^i}}\right\rbrace  \displaystyle\bigcup_{i=1}^{\alpha-3}\left\lbrace\widehat{a^{p^i}b}\right\rbrace, $$	
	 where $o(b)=p$, $o(a^{p^i})=p^{\alpha-{(i+1)}}=o(a^{p^i}b)$, $\widehat{a}=\displaystyle\bigcup_{j=1}^{p-1} [ab^j] \cup [a]$, $\widehat{b}=\displaystyle\bigcup_{j=1}^{p-1} [a^{\alpha-2}b^j] \cup [b]$, $\widehat{a^{p^i}}=[a^{p^i}]$, and $\widehat{a^{p^i}b}=\displaystyle \bigcup_{j=1}^{p-1}[a^{p^i}b^j]$ for 	
	$i=1,2,\ldots \alpha-2$. 
	
	Applying these in Theorems~\ref{auto} and~\ref{autRP}, we get
	\begin{align*}
		Aut( \overrightarrow{\mathcal{RP}}(\mathbb M_{p^\alpha}))=Aut( \mathcal{RP}(\mathbb M_{p^\alpha}))&= S_{\widehat{a}}\times S_{\widehat{b}}\times \left( {\prod_{i=1}^{\alpha-2}}S_{\widehat{a^{p^i}}} \right)\times \left({\prod_{i=1}^{\alpha-3}} S_{\widehat{a^{p^i}b}}\right)\\
		&= S_{p^{\alpha-1}(p-1)}\times S_{p(p-1)}\times G_1  \times G_2,
	\end{align*} 
	where $G_j=\displaystyle{\prod_{i=1}^{\alpha-j-1}}S_{p^{\alpha-i-2}(p-1)^{j}}$for $j=1,2$.
\end{proof}

Note that the subgroup lattice of $\mathbb Z_{p^{\alpha-1}}\times \mathbb Z_p$ is isomorphic to the subgroup lattice of $\mathbb M_{p^\alpha}(p^\alpha\neq 8)$. As a consequence, we get the following result by using Example~\ref{modular}(2).
\begin{example}
	Let $\alpha\geq 2$ be an integer and $p$ be a prime. Then 
	
	$Aut( \overrightarrow{\mathcal{RP}}(\mathbb Z_{p^{\alpha-1}}\times \mathbb Z_p))=Aut( \mathcal{RP}(\mathbb Z_{p^{\alpha-1}}\times \mathbb Z_p))= S_{p^{\alpha-1}(p-1)}\times S_{p(p-1)}\times G_1  \times G_2$, 
	
	where $p^\alpha \neq 8$ and $G_j=\displaystyle{\prod_{i=1}^{\alpha-j-1}}S_{p^{\alpha-i-2}(p-1)^{j}}$ for $j=1,2$.
\end{example}

\begin{example}
	Let $n$ be an odd positive integer. Then
	
	$Aut( \overrightarrow{\mathcal{RP}}(V_{8n}))=Aut( \mathcal{RP}(V_{8n}))=S_{2n}\times S_{2n}\times \displaystyle \left(\prod_{d\mid 2n}S_{\varphi(d)}\right)\times \left(\displaystyle\prod_{d\mid 2n, d\nmid n}S_{\varphi(d)}\right)\wr S_2 $. 
\end{example}

\begin{proof}
	Notice that 
	\begin{center}
		$M(V_{8n})=\left\lbrace \left\langle ab^2\right\rangle ,\left\langle a\right\rangle ,\left\langle a^2b^2\right\rangle \right\rbrace \bigcup \left\lbrace \left\langle a^jb \right\rangle \mid  j~ \text{is even}  \right\rbrace\bigcup$ $   \left\lbrace \left\langle a^jb^k \right\rangle \mid  j~ \text{is odd},~k=1~ \text{or}~3  \right\rbrace$,
	\end{center} where $o(ab^2)=o(a)=o(a^2b^2)=2n$ and $o(a^jb)=4$, if $j$ is even; and  $o(a^jb^k)=2$, if $j$ is odd and $k=1$ or 3. Also, $\left\langle a^ib\right\rangle \cap \left\langle a^jb\right\rangle =\left\langle b^2\right\rangle $. It follows that each $\sigma\in \mathscr{M}(V_{8n})$ fixes all the elements in $\left\lbrace \left\langle a^jb \right\rangle \mid  j~ \text{is even}  \right\rbrace$ and $\left\lbrace \left\langle a^jb^k \right\rangle \mid  j~ \text{is odd},~k=1~ \text{or}~3  \right\rbrace$. Notice that $\left\langle a^2b^2\right\rangle $ contains a subgroup $\left\langle b^2\right\rangle $ of order 2 which is contained in $n$ number of subgroups of order 4; but $\left\langle a\right\rangle $ and $\left\langle a^2b\right\rangle $ does not contain such a subgroup. It turns out that $\mathscr{M}(V_{8n})\cong S_2$.  
	
	Suppose $\mathit{Z}_n$ denotes the reduced power graph of a cyclic group of order $n$. Then the structure of $\mathcal{RP}(V_{8n})$ is as shown in~\cite[Figure 3]{Ali}. From this structure, we have $$\mathscr{R}(V_{8n})= \widehat{a^2b}\cup \widehat{ab}\cup \mathscr{R}\left( \left\langle a^2b^2 \right\rangle \right) \cup \left[ \mathscr{R}\left( \left\langle ab^2 \right\rangle  \right)\setminus \mathscr{R}\left( \left\langle a^2\right\rangle \right)\right]  \cup \left[ \mathscr{R}\left( \left\langle a \right\rangle  \right)\setminus \mathscr{R}\left( \left\langle a^2\right\rangle \right)\right], $$ where $\widehat{a^2b}=\left\lbrace  a^jb^k \mid  j~ \text{is even},~k=1~ \text{or}~3  \right\rbrace$ and $\widehat{ab}=\left\lbrace  a^jb^k \mid  j~ \text{is odd},~k=1~ \text{or}~3  \right\rbrace$. 
	
	Applying these in Theorems~\ref{auto} and~\ref{autRP}, we get\\
	
	\noindent $Aut( \overrightarrow{\mathcal{RP}}(V_{8n}))=Aut( \mathcal{RP}(V_{8n}))$
	\begin{align*}
		&= S_{2n}\times S_{2n}\times  \left( {\prod_{\widehat{u}\in \mathscr{R}(\left\langle a^2b^2\right\rangle) }S_{\widehat{u}}}\right)  \times \left[\left( {\prod_{\widehat{u}\in \mathscr{R}(\left\langle ab^2\right\rangle), \widehat{u}\notin \mathscr{R}(\left\langle a^2\right\rangle) }S_{\widehat{u}}} \right) \times \left( {\prod_{\widehat{u}\in \mathscr{R}(\left\langle a\right\rangle), \widehat{u}\notin \mathscr{R}(\left\langle a^2\right\rangle)}S_{\widehat{u}}} \right) \right]\ltimes S_2\\
		&=S_{2n}\times S_{2n}\times \left( {\prod_{\widehat{u}\in \mathscr{R}(\mathbb Z_{2n}) }S_{\widehat{u}}} \right) \times \left[\left(  {\prod_{\widehat{u}\in \mathscr{R}(\mathbb Z_{2n}),\widehat{u}\notin \mathscr{R}(\mathbb Z_{n}) }S_{\widehat{u}}}\right)  \times \left(  {\prod_{\widehat{u}\in \mathscr{R}(\mathbb Z_{2n}),\widehat{u}\notin \mathscr{R}(\mathbb Z_{n}) }S_{\widehat{u}}}\right) \right] \ltimes S_2\\
		&=S_{2n}\times S_{2n}\times \left( \displaystyle\prod_{d\mid 2n}S_{\varphi(d)}\right) \times\left[\left( \displaystyle\prod_{d\mid 2n, d\nmid n}S_{\varphi(d)}\right) \times \left( \displaystyle\prod_{d\mid 2n, d\nmid n}S_{\varphi(d)}\right) \right]\ltimes S_2\\ 
		&=S_{2n}\times S_{2n}\times \left( \displaystyle\prod_{d\mid 2n}S_{\varphi(d)}\right) \times \left(  \displaystyle\prod_{d\mid 2n, d\nmid n}S_{\varphi(d)}\right)\wr S_2.
	\end{align*}
This completes the proof.
\end{proof}

\begin{example}\label{autV8n}
	\begin{enumerate}[\normalfont(1)]
		\item Let $n=2^kt$, where $t$ is an odd positive integer and $k$ is a positive integer with $n\neq 2$. Then
		$Aut( \overrightarrow{\mathcal{RP}}(V_{8n}))=Aut( \mathcal{RP}(V_{8n}))=S_{2n}\times S_{2n}\times S_{2n}\times \left( \displaystyle\prod_{d\mid 2^{k}t}S_{\varphi(d)}\right) \times \left( \displaystyle\prod_{d\mid 2t, d\nmid t}S_{\varphi(d)}\right) \times \left( \displaystyle\prod_{d\mid 2^{k}t, d\nmid t}S_{\varphi(d)}\right) $. 
		\item $Aut( \overrightarrow{\mathcal{RP}}(V_{16}))=Aut( \mathcal{RP}(V_{16}))=S_5 \times (S_{4}\wr S_{2})$.
	\end{enumerate} 
\end{example}

\begin{proof}
	(1) Assume that $n\neq 2$.  	It can be seen that
	\begin{align*}
		 M(V_{8n})=&\left\lbrace\left\langle a\right\rangle ,\left\langle a^{2^{k+1}}b^2\right\rangle \right\rbrace \bigcup \left\lbrace \left\langle a^{2^{i}}b^2 \right\rangle \mid  0\leq i\leq k  \right\rbrace \bigcup \left\lbrace \left\langle a^jb \right\rangle \mid  j~ \text{is even}  \right\rbrace \\
		 &\bigcup   \left\lbrace \left\langle a^jb^k \right\rangle \mid  j~ \text{is odd},~k=1~ \text{or}~3  \right\rbrace,
	\end{align*} where $o(a)=2^{k+1}t$, $o(a^{2^{k+1}}b^2)=2^kt$, $o(a^{2^{i}}b^2)=2^{k+2-i}t$ for $0\leq i\leq k$; $o(a^jb^k)=4$, if $j$ is even and $k=1$ or 3; $o(a^jb^k)=2$, if $j$ is odd and $k=1$ or 3. Also, $\left\langle a^ib\right\rangle \cap \left\langle a^jb\right\rangle =\left\langle b^2\right\rangle $. It follows that each $\sigma\in \mathscr{M}(V_{8n})$ fixes all the elements in $\left\lbrace \left\langle a^jb \right\rangle \mid  j~ \text{is even}  \right\rbrace$ and $\left\lbrace \left\langle a^jb^k \right\rangle \mid  j~ \text{is odd},~k=1~ \text{or}~3  \right\rbrace$. Notice that $o(a)= o(ab^2)$. However, the proper cyclic subgroups of $\left\langle a\right\rangle $ and $\left\langle ab^2\right\rangle $ are the same, so $\left\langle a\right\rangle $ and $\left\langle ab^2\right\rangle $ are fixed by each $\sigma \in \mathscr{M}(V_{8n})$. Also, for $1\leq i\leq k$, $\left\langle a^{2^i}b^2\right\rangle $ is the only maximal cyclic subgroup of order $2^{k+2-i}t$. It follows that $\left\langle a^{2^i}b^2\right\rangle $ is fixed by each  $\sigma \in \mathscr{M}(V_{8n})$. Hence $\mathscr{M}(V_{8n})=\{\textbf{1}_{M(V_{8n})}\}$.  
	
	Suppose $\mathit{Z}_n$ denotes the reduced power graph of a cyclic group of order $n$. Then the structure of $\mathcal{RP}(V_{8n})$ is the same as the graph shown in~\cite[Figure 5]{Ali}. From this structure, we have $$\mathscr{R}(V_{8n})= \widehat{a^2b}\cup \widehat{ab}\cup \widehat{a} \cup  \mathscr{R}\left( \left\langle a^2 \right\rangle \right) \cup \left[\mathscr{R}\left( \left\langle a^{2^{k}}b^2 \right\rangle  \right)\setminus \mathscr{R}\left( \left\langle a^{2^{k-1}}\right\rangle \right)\right] \displaystyle \bigcup_{i=1}^{k} \left[ \mathscr{R}\left( \left\langle a^{2^i}b^2 \right\rangle  \right)\setminus \mathscr{R}\left( \left\langle a^{2^{i+2}}\right\rangle \right)\right],$$ where $\widehat{a^2b}=\left\lbrace  a^jb^k \mid  j~ \text{is even},~k=1~ \text{or}~3  \right\rbrace$;  $\widehat{ab}=\left\lbrace  a^jb^k \mid  j~ \text{is odd},~k=1~ \text{or}~3  \right\rbrace$ and $\widehat{a}=[a]\cup[ab^2]$. Applying these in Theorems~\ref{auto} and~\ref{autRP}, we get 
	\begin{align*}
	Aut(\overrightarrow{\mathcal{RP}}(V_{8n}))&=Aut( \mathcal{RP}(V_{8n}))\\	&= S_{2n}\times S_{2n}\times S_{2n} \times \left( {\prod_{\widehat{u}\in \mathscr{R}(\left\langle a^2\right\rangle) }S_{\widehat{u}}} \right)\times \left({\prod_{\widehat{u}\in \mathscr{R}(\left\langle a^{2^{k}}b^2\right\rangle), \widehat{u}\notin \mathscr{R}(\left\langle a^{2^{k-1}}\right\rangle) }S_{\widehat{u}}}\right)\\ 
		&~~~\times \left(\displaystyle {\prod_{\widehat{u}\in \mathscr{R}(\left\langle a^{2^i}b^2\right\rangle), \widehat{u}\notin \mathscr{R}(\left\langle a^{2^{i+2}}\right\rangle)}S_{\widehat{u}}}\right) \\
		&=S_{2n}\times S_{2n}\times S_{2n} \times \left( {\prod_{\widehat{u}\in \mathscr{R}(\mathbb Z_{2^{k}t}) }S_{\widehat{u}}} \right)\times \left({\prod_{\widehat{u}\in \mathscr{R}(\mathbb Z_{2t}),\widehat{u}\notin \mathscr{R}(\mathbb Z_{t}) }S_{\widehat{u}}}\right)\\
		&~~ \times \left({\prod_{\widehat{u}\in \mathscr{R}(\mathbb Z_{2^{k}t}),\widehat{u}\notin \mathscr{R}(\mathbb Z_{t}) }S_{\widehat{u}}}\right)\\
		&=S_{2n}\times S_{2n}\times S_{2n} \times \left(\displaystyle\prod_{d\mid 2^{k}t}S_{\varphi(d)}\right)\times \left(\displaystyle\prod_{d\mid 2t, d\nmid t}S_{\varphi(d)}\right)\times \left( \displaystyle\prod_{d\mid 2^{k}t, d\nmid t}S_{\varphi(d)}\right).
	\end{align*}
	
	(2) Assume that $n=2$. 	Notice that	$$M(V_{8n})=\left\lbrace\left\langle a\right\rangle, \left\langle b\right\rangle,\left\langle ab\right\rangle , \left\langle a^2b\right\rangle, \left\langle a^3b\right\rangle, \left\langle ab^2\right\rangle, \left\langle ab^3, \right\rangle \left\langle a^2b^2\right\rangle, \left\langle a^3b^3\right\rangle \right\rbrace,$$
	 where $o(a) = o(b) = o(ab^2)=o(a^2b)=2$ and $o(ab)= o(a^3b)= o(ab^3)= o(a^2b^2)=o(a^2b^2)=2$. Also $\left\langle a\right\rangle \cap \left\langle ab^2\right\rangle =\left\langle a^2\right\rangle $ and $\left\langle b\right\rangle \cap \left\langle a^2b\right\rangle =\left\langle b^2\right\rangle $. It follows that $\mathscr{M}(V_{8n})\cong S_2$. Since $\mathscr{R}(V_{16})=\{\widehat{a},\widehat{b},\widehat{ab}\}$, where $\widehat{a}=[a]\cup[ab^2]$, $\widehat{b}=[b]\cup [a^2b]$ and $\widehat{ab}=[ab]\cup[a^3b]\cup[ab^3]\cup[a^2b^2]\cup[a^3b^3]$.  Applying these in Theorems~\ref{auto} and~\ref{autRP}, we get 
	\begin{align*}
		Aut( \overrightarrow{\mathcal{RP}}(V_{8n}))=Aut( \mathcal{RP}(V_{8n}))&= S_{\widehat{ab}}\times S_{\widehat{a}}\times S_{\widehat{b}} \ltimes S_2\\
		&=S_{5}\times (S_{4}\times S_4) \ltimes {S_2}\\
		&=S_{5}\times (S_{4}\wr S_{2}).
	\end{align*}
This completes the proof.
\end{proof}

%By the previous argument used in Theorem~\ref{autV8n}, we get the following result.
%
%\begin{example}
%	\begin{enumerate}
	%		\item Let $n=2^k$, where $k\geq 2$ is a positive integer. Then
	%		$Aut( \overrightarrow{\mathcal{RP}}(V_{8n}))=Aut( \mathcal{RP}(V_{8n}))=S_{2n}\times S_{2n}\times S_{2n}\times \displaystyle\prod_{d\mid 2^{k}}S_{\varphi(d)}\times \displaystyle\prod_{d\mid 2^{k}}S_{\varphi(d)}$.
	%		
	%		\item
	%		$Aut( \overrightarrow{\mathcal{RP}}(V_{16}))=Aut( \mathcal{RP}(V_{16}))=S_{4}\wr S_{2}$.
	%	\end{enumerate}
%	
%\end{example}

\section{Relation between $Aut\left(\mathcal{RP}(G)\right)$ (resp. $Aut(\overrightarrow{\mathcal{RP}}(G))$) and $Aut\left(\mathcal{P}(G)\right)$ (resp. $Aut(\overrightarrow{\mathcal{P}}(G))$)}
Let $G$ be a group. Consider the equivalence relation $\approx$ on $G$ defined as follows: for every $x,y\in G$, $x\approx y$ if and only if $N_{\mathcal{P}(G)}[x]=N_{\mathcal{P}(G)}[y]$. Let $\overline{x}$ denote the $\approx$-class determined by $x$ (cf.~\cite{cameron}).

\begin{thm}\label{Aut(RP)=Aut(P)} 
	Let $G$ be a finite group. Then we have the following:
	\begin{enumerate}[\normalfont(1)]
		\item $Aut\left(\mathcal{RP}(G)\right)\subseteq Aut({\mathcal{P}}(G))$ if and only if for every $x\in G$, $\widehat{x}$ is either of    Type~I or of Type~II with parameter $(2,r)$, where $r\geq 2$;
		
		\item  $Aut\left(\mathcal{P}(G)\right)\subseteq Aut\left(\mathcal{RP}(G)\right)$ if and only if  $\overline{x}=[x]$ for every $x\in G$.
	\end{enumerate}
\end{thm}

\begin{proof}
	(1) Suppose that for every $x\in G$, $\widehat{x}$ is of Type~II with parameter $(m,r)$, where $m\neq 2$. Then $\widehat{x}=\displaystyle\bigcup_{i=1}^{r}[x_i]$, where $o(x_i)=m$ for $i=1,2,\ldots, r$. Consider the function $\pi$ on $G$ defined by $x_1^\pi=x_2$, $x_2^\pi=x_1$ and $y^\pi=y$ for every $y\neq x_1,x_2$. Then $\pi$ is a bijection and $N_{\mathcal{RP}(G)}(x_1)=N_{\mathcal{RP}(G)}(x_2)$, so $\pi\in Aut\left(\mathcal{RP}(G)\right)$. Since $m\neq 2$, there exists $y\in G$ such that $\left\langle y\right\rangle =\left\langle x_1\right\rangle $ and $\left\langle y\right\rangle \neq \left\langle x_2\right\rangle $. So $(y,x_1)\in A(\mathcal{P}(G))$. However, $(y^\pi, x_1^\pi)=(y,x_2)\notin A(\mathcal{P}(G))$. Consequently, $Aut\left(\mathcal{RP}(G)\right)\nsubseteq Aut({\mathcal{P}}(G))$. Similarly, we can show that, if $\widehat{x}$ is of Type~III or Type~IV, then $Aut\left(\mathcal{RP}(G)\right)\nsubseteq Aut({\mathcal{P}}(G))$. 
	
	Next we assume that for every $x\in G$, $\widehat{x}$ is either of Type~I or of Type~II with parameter $(2,r)$, where $r\geq 2$. 
	Let $\pi\in Aut\left(\mathcal{RP}(G)\right)$ and let $x,y\in G$. Suppose that $x$ and $y$ are adjacent in $\mathcal{P}(G)$. Then $\left\langle x \right\rangle \subseteq \left\langle y\right\rangle$ or $\left\langle y\right\rangle \subseteq \left\langle x\right\rangle $. If $\left\langle x\right\rangle \neq \left\langle y\right\rangle $, then $x$ and $y$ are also adjacent in $\mathcal{RP}(G)$ and so $x^\pi$ and $y^\pi$ are adjacent in $\mathcal{RP}(G)$. It follows that $x^\pi$ and $y^\pi$ are adjacent in $\mathcal{P}(G)$. If $\left\langle x\right\rangle =\left\langle y\right\rangle $, then $\widehat{x}=\widehat{y}$. It turns out by assumption that  $ \widehat{x}^\pi = \widehat{y}^\pi $ and hence  $\widehat{x^\pi}=\widehat{y^\pi}$. By Proposition~\ref{order equal of eql class}(3), $\widehat{x^\pi}$ and $\widehat{y^\pi}$ are of Type~I. It follows that $[x^\pi]=[y^\pi]$ and so $\left\langle x^\pi\right\rangle =\left\langle y^\pi\right\rangle $. This implies that $x^\pi$ and $y^\pi$ are adjacent in $\mathcal{P}(G)$.

	Suppose $x$ and $y$ are not adjacent in $\mathcal{P}(G)$. Then $\left\langle x\right\rangle \nsubseteq \left\langle y\right\rangle $ and $\left\langle y\right\rangle \nsubseteq\left\langle x\right\rangle $. So $x$ and $y$ are also not adjacent in $\mathcal{RP}(G)$. It follows that $x^\pi$ and $y^\pi$ are not adjacent in $\mathcal{RP}(G)$. So $\left\langle x^\pi\right\rangle \not\subset\left\langle y^\pi\right\rangle $, $\left\langle y^\pi\right\rangle \not\subset \left\langle x^\pi\right\rangle $ or $\left\langle x^\pi\right\rangle = \left\langle y^\pi\right\rangle $. If $\left\langle x^\pi\right\rangle =\left\langle y^\pi\right\rangle $, then by a similar argument used above, we get $\left\langle x\right\rangle =\left\langle y\right\rangle $. So $x$ and $y$ are adjacent in $\mathcal{P}(G)$, which is a contradiction. Thus $\left\langle x^\pi\right\rangle \not\subset \left\langle y^\pi\right\rangle $ and $\left\langle y^\pi\right\rangle \not\subset \left\langle x^\pi\right\rangle $ and so $x^\pi$ and $y^\pi$ are not adjacent in $\mathcal{P}(G)$. Thus $\pi\in Aut\left(\mathcal{P}(G)\right)$ and so $Aut\left(\mathcal{RP}(G)\right)\subseteq Aut\left(\mathcal{P
	}(G)\right)$. 
	
	(2) Suppose there exists $x\in G$ such that $\overline{x}\neq [x]$. Then there exists $y\in \overline{x}$ such that $\left\langle x\right\rangle \neq \left\langle y\right\rangle $, and so $o(x)\neq o(y)$. Consider the function $\pi$ on $G$ defined by $x^\pi=y$, $y^\pi=x$ and $w^\pi=w$ for every $w\in G$ with $w\neq x, y$. Then for every $u,v\in G$, $(u,v)\in A(\mathcal{P}(G))$ if and only if $(u^\pi,v^\pi)\in A(\mathcal{P}(G))$. So $\pi\in Aut\left(\mathcal{P}(G)\right)$. Since $\overline{x}\neq [x]$, it follows that one of the following holds: (i) either $o(x)$ or $o(y)$ is a composite number or (ii) $x=e$ and $o(y)$ is a prime.
	
	Suppose that either $o(x)$ or $o(y)$ is a composite number. Without loss of generality, we assume that $o(x)$ is a composite number. Then by parts (2) and (3) of Proposition~\ref{order equal of eql class}, $\pi \notin Aut\left(\mathcal{RP}(G)\right)$, since  $o(x)\neq o(x^\pi)$. Thus $Aut\left(\mathcal{P}(G)\right)\nsubseteq Aut({\mathcal{RP}}(G))$. Suppose that $x=e$ and $o(y)$ is a prime. Then by Lemma~\ref{image of e}, $G\cong \mathbb Z_{2^m}$ $(m\geq 1)$ or $Q_{2^\alpha}$ $(\alpha\geq 3)$.  By Example~\ref{Aut(Zn)}(2) and \cite[Example 5.1(ii)]{Min Aut PG 2016}, it follows that  $Aut\left(\mathcal{P}(\mathbb Z_{2^m})\right)\nsubseteq Aut({\mathcal{RP}}(\mathbb Z_{2^m}))$. Now we show that $Aut\left(\mathcal{P}(Q_{2^\alpha})\right) \nsubseteq Aut({\mathcal{RP}}(Q_{2^\alpha}))$. Let $a^2, a^4 \in Q_{2^\alpha}$. Then $o(a^2)= 2^{\alpha +1}$ and $o(a^4)= 2^{\alpha}$. Consider the function $\pi$ on $Q_{2^\alpha}$ defined by $(a^2)^{\pi}=a^4$, $(a^4)^{\pi}=a^2$, and $x^{\pi}=x$ for all $x\neq a^2, a^4$. Since $N_{\mathcal{P}(G)}(a^2)=N_{\mathcal{P}(G)}(a^4)$, it follows that $\pi \in Aut\left(\mathcal{P}(Q_{2^\alpha})\right)$. But $deg_{\mathcal{RP}(Q_{2^\alpha})}(a^2)\neq deg_{\mathcal{RP}(Q_{2^\alpha}))}(a^4)$ and hence $\pi \notin Aut\left(\mathcal{RP}(Q_{2^\alpha})\right)$. Thus $Aut\left(\mathcal{P}(Q_{2^\alpha})\right) \nsubseteq Aut({\mathcal{RP}}(Q_{2^\alpha}))$.
	
	Assume that for every $x\in G$,  $\overline{x}=[x]$. Let $\pi\in Aut\left(\mathcal{P}(G)\right)$ and let $x,y\in G$. Suppose either $\overline{x}$ or $\overline{y}$ is $\overline{e}$. Then clearly, $\left\langle x\right\rangle \subset \left\langle y\right\rangle $ if and only if $\left\langle x^\pi\right\rangle \subset \left\langle y^\pi\right\rangle $. This implies that if $(x,y)\in A\left(\mathcal{RP}(G)\right)$, then $(x^{\pi},y^{\pi})\in A\left(\mathcal{RP}(G)\right)$. Now we assume that $\overline{x}, \overline{y}\neq \overline{e}$. If $(x,y)\in A\left(\mathcal{RP}(G)\right)$, then $(x,y)\in A\left(\mathcal{P}(G)\right)$ and so $(x^\pi,y^\pi)\in A\left(\mathcal{P}(G)\right)$. It follows that any one of the following holds: $\left\langle x^\pi\right\rangle  \subset \left\langle y^\pi\right\rangle $, $\left\langle y^\pi\right\rangle \subset \left\langle x^\pi\right\rangle $ or $\left\langle x^\pi\right\rangle =\left\langle y^\pi\right\rangle $. Suppose $\left\langle y^\pi\right\rangle = \left\langle x^\pi\right\rangle $. Then $\overline{x^\pi}=\overline{y^\pi}$ and so $\overline{x}=\overline{y}$. Hence $[x]=[y]$ and so $\left\langle x\right\rangle =\left\langle y\right\rangle $, which is a contradiction to the fact that $(x,y)\in A\left(\mathcal{RP}(G)\right)$. If  $\left\langle x^\pi\right\rangle  \subset \left\langle y^\pi\right\rangle $ or $\left\langle y^\pi\right\rangle  \subset \left\langle x^\pi\right\rangle $, then $(x^\pi,y^\pi)\in A\left(\mathcal{RP}(G)\right)$.
	
	Suppose $(x,y)\notin A\left(\mathcal{RP}(G)\right)$. Then either $\left\langle x\right\rangle \not \subset \left\langle y\right\rangle $ and $\left\langle y\right\rangle \not \subset \left\langle x\right\rangle $, or $\left\langle x\right\rangle =\left\langle y\right\rangle $. If $\left\langle x\right\rangle \not \subset \left\langle y\right\rangle $ and $\left\langle y\right\rangle \not  \subset \left\langle x\right\rangle $, then $(x,y)\notin A\left(\mathcal{P}(G)\right) $. This implies that $(x^\pi,y^\pi)\notin A\left(\mathcal{P}(G)\right)$ and so $(x^\pi,y^\pi)\notin A\left(\mathcal{RP}(G)\right)$, since $\mathcal{RP}(G)$ is a subgraph of $\mathcal{P}(G)$. If $\left\langle x\right\rangle =\left\langle y\right\rangle $, then $\overline{x}=\overline{y}$, and so $\overline{x^\pi}=\overline{y^\pi}$. It follows that $[x^\pi]=[y^\pi]$. Then $\left\langle x^\pi\right\rangle =\left\langle y^\pi\right\rangle $ and $(x^\pi,y^\pi)\notin A\left(\mathcal{RP}(G)\right)$. Thus $\pi\in Aut\left(\mathcal{RP}(G)\right)$ and so $Aut\left(\mathcal{P}(G)\right)\subseteq Aut({\mathcal{RP}}(G))$.
\end{proof}

The following result is an immediate consequence of Theorem~\ref{Aut(RP)=Aut(P)}.
\begin{cor}\label{RP=P}
	Let $G$ be a finite group. Then $Aut\left(\mathcal{RP}(G)\right)=Aut\left(\mathcal{P}(G)\right)$ if and only if for every $x\in G$, $\overline{x}=[x]$ and $\widehat{x}$ is either of Type~I or of Type~II with parameter $(2,r)$, where $r\geq 2$.
\end{cor}

%\begin{proof}
%	By Theorem~\ref{Aut(RP)=Aut(P)}, for every $x\in G$, $\overline{x}=[x]$ and $\widehat{x}$ is either of Type I or of Type II with parameter $(2,r)$, where $r\geq 2$  or $G\cong Q_{2^\alpha}$ $(\alpha\geq 3)$. Hence the result follows from the fact that $Aut\left(\mathcal{P}(Q_{2^\alpha})\right)\subset Aut({\mathcal{RP}}(Q_{2^\alpha}))$.	
%\end{proof}

\begin{thm}\label{DRP subset DP}
	Let $G$ be a finite group. Then we have the following:
	\begin{enumerate}[\normalfont(1)]
		\item  $Aut(\overrightarrow{\mathcal{P}}(G))\subseteq Aut(\overrightarrow{\mathcal{RP}}(G))$;
		
		\item $Aut( \overrightarrow{\mathcal{RP}}(G))\subseteq Aut(\overrightarrow{\mathcal{P}}(G))$ if and only if for every $x\in G$, $\widehat{x}$ is either of    Type~I or of Type~II with parameter $(2,r)$, where $r\geq 2$.
	\end{enumerate}
\end{thm}

\begin{proof}
	(1) Let $\pi\in Aut(\overrightarrow{\mathcal{P}}(G))$ and let $x,y \in G$. Then $\left\langle x\right\rangle =\left\langle y\right\rangle $ if and only if $\left\langle x^\pi\right\rangle =\left\langle y^\pi\right\rangle $; and $\left\langle y\right\rangle \subset \left\langle x\right\rangle $ if and only if $\left\langle y^\pi\right\rangle \subset \left\langle x^\pi\right\rangle $. It follows that $(x,y)\in A(\overrightarrow{\mathcal{RP}}(G))$ if and only if $(x^\pi,y^\pi)\in A(\overrightarrow{\mathcal{RP}}(G))$. Thus $\pi\in Aut( \overrightarrow{\mathcal{RP}}(G))$, and so $Aut(\overrightarrow{\mathcal{P}}(G))\subseteq Aut( \overrightarrow{\mathcal{RP}}(G))$.
	
	(2) Consider the function $\pi$ defined in the proof of Theorem~\ref{Aut(RP)=Aut(P)}(1). It is shown that $\pi \in Aut( \overrightarrow{\mathcal{RP}}(G))$. Hence, if there exists $x\in G$ such that $\widehat{x}$ is of Type~II with parameter $(m,r)$, where $m\neq 2$, or Type~III or Type~IV, then by a similar argument used in the proof of Theorem~\ref{Aut(RP)=Aut(P)}(1), we get $Aut( \overrightarrow{\mathcal{RP}}(G))\neq Aut(\overrightarrow{\mathcal{P}}(G))$. 
	
	Suppose that for every $x\in G$, $\widehat{x}$ is either of of Type~I or of Type~II with parameter $(2,r)$, where $r\geq 2$. 	 Let $\pi\in Aut(\overrightarrow{\mathcal{RP}}(G))$ and let $x,y\in G$. Then by Lemma~\ref{Auto presetve suset}(1), $\left\langle y\right\rangle \subset \left\langle x\right\rangle $ if and only if $\left\langle y^\pi\right\rangle \subset \left\langle x^\pi\right\rangle $. If $\left\langle x\right\rangle =\left\langle y\right\rangle $, then  $\widehat{x}=\widehat{y}$. It turns out by assumption that $\widehat{x}^\pi=\widehat{y}^\pi$. Hence  $\widehat{x^\pi}=\widehat{y^\pi}$. By Proposition~\ref{order equal of eql class}(3), $[x^\pi]=[y^\pi]$ and so $\left\langle x^\pi\right\rangle =\left\langle y^\pi\right\rangle $. Similarly, if $\left\langle x^\pi\right\rangle =\left\langle y^\pi\right\rangle $, then $\left\langle x\right\rangle =\left\langle y\right\rangle $. It follows that $\pi\in Aut(\overrightarrow{\mathcal{P}}(G))$. So $Aut( \overrightarrow{\mathcal{RP}}(G))\subseteq Aut(\overrightarrow{\mathcal{P}}(G))$.
\end{proof}

The following result is an immediate consequence of Theorem~\ref{DRP subset DP}.

\begin{cor}\label{DRP=DP}
	Let $G$ be a finite group. Then $Aut( \overrightarrow{\mathcal{RP}}(G))=Aut(\overrightarrow{\mathcal{P}}(G))$ if and only if for every $x\in G$, $\widehat{x}$ is either of  Type~I or of Type~II with parameter $(2,m)$, where $m\geq 1$.
\end{cor}

\begin{example}
	$Aut(\overrightarrow{\mathcal{RP}}(U_{6n}))= Aut\left( {\mathcal{RP}}(U_{6n})\right)=Aut\left( {\mathcal{P}}(U_{6n})\right)=Aut(\overrightarrow{\mathcal{P}}(U_{6n}))$.
\end{example}
\begin{proof}
	Notice that $Aut\left( {\mathcal{P}}(U_{6n})\right)$ is given in~\cite[Theorem 2.6]{Ali}. Suppose $\mathit{Z}_n$ denotes the reduced power graph of a cyclic group of order $n$. Then  the structure of $\mathcal{RP}(U_{6n})$ is the same as the graph shown  in~\cite[Figure 2]{Ali}. From the structure of $\mathcal{P}(U_{6n})$ and $\mathcal{RP}(U_{6n})$, we have $\overline{x}=\widehat{x}=[x]$. Hence by Corollaries~\ref{RP=P},~\ref{DRP=DP} and \cite[Proposition~5.2]{Min Aut PG 2016}, we get the result.
\end{proof}

%Suppose $P'(G)=\{e\}$, ie, there is no cyclic subgroups $C_i\neq C_j$ in $G$ such that $C_i-\{[C_i]\}= C_j-\{[C_j]\}$, then $Aut\left( \overrightarrow{\mathcal{RP}}(G)\right)= \left( \displaystyle{\prod_{i=1}^{t}}S_{\widehat{[x]}_i}\right)\rtimes P(G)$ and $G\ncong \mathbb Z_{2^n}$, $Q_{2^\alpha}$, $Aut\left(\mathcal{RP}(G)\right)= \left( \displaystyle{\prod_{i=1}^{t}}S_{\widehat{[x]}_i}\right)\rtimes P(G)$.

%	\section*{Acknowledgments}
%
%The authors would like to thank the reviewer for his/her comments and suggestions.

%\section*{Funding}
%
% The research work of the first authors is supported by V.V.V College Managing Board Research Grant under VVVCMB-MRP Scheme 2023.

%\section*{Data availability statement}
%Data sharing not applicable to this article as no data sets were generated or analyzed during the current study
%
%
%\section*{Disclosure statement} Both the authors declare that they have no conflicts of interest that are relevant to the content of this article.

%	\bibliography{sn-bibliography}% common bib file
%	
%	%% if required, the content of .bbl file can be included here once bbl is generated
%	%%\input sn-article.bbl
%	
%	%% Default %%
%	\input sn-sample-bib.tex%

\end{document}